\definecolor{Maroon}{HTML}{ad2231}
\definecolor{webgreen}{HTML}{008000}
\newtheorem{corollary}{Corollary}
\newtheorem{proposition}{Proposition}
\newtheorem{lemma}{Lemma}
\newtheorem{remark}{Remark}
\newtheorem{theorem}{Theorem}
\newtheorem{definition}{Definition}
\newtheorem{example}{Example}
\theoremstyle{definition}
\newtheorem{Assumption}{Assumption}
\begin{document}
\title{The existence of a giant cluster for percolation on large Crump-Mode-Jagers trees}
\author{G. Berzunza\footnote{ {\sc Department of Mathematics, Uppsala University. L\"agerhyddsv\"agen 1, Hus 1, 6 och 7, Box 480, 751 06 Uppsala, Sweden.} E-mail: gabriel.berzunza-ojeda@math.uu.se}}
\maketitle

\vspace{0.1in}

\begin{abstract} 
In this paper, we consider random trees associated with the genealogy of Crump-Mode-Jagers processes and perform Bernoulli bond-percolation whose parameter depends on the size of the tree. Our purpose is to show the existence of a giant percolation cluster for appropriate regimes as the size grows. We stress that the family trees of Crump-Mode-Jagers processes include random recursive trees, preferential attachment trees, binary search trees for which this question has been answered by Bertoin \cite{Be2013}, as well as (more general) $m$-ary search trees, fragmentation trees, median-of-($2\ell+1$) binary search trees, to name a few, where up to our knowledge percolation has not been studied yet. 

\end{abstract}

\noindent {\sc Key words and phrases}: Random tree; percolation; giant component; Crump-Mode-Jagers processes.

\noindent {\sc Subject Classes}: 60J80; 60K35; 05C05.

\section{Introduction and main results}


Consider a graph $G_{n}$ of large but finite size $n \in \mathbb{N}$ and perform Bernoulli bond-percolation with parameter $p_{n} \in (0,1)$ that depends on the size of $G_{n}$ (typically the size of a graph refers to its number of vertices but not necessarily). This means we first pick a finite graph and then remove each edge with probability $1 - p_{n}$, independently of the other edges, inducing a partition of its set of vertices into connected clusters. A natural problem in this setting is to show the existence of a giant cluster for appropriate regimes of the percolation parameter $p_{n}$, when the size of the graph grows. More precisely, one is interested in finding a supercritical $p_{n}$ such that, with high probability as $n \rightarrow \infty$, there exists a cluster that is of a size comparable to the entire graph. Let us recall some known answers to this question in some important instances.

In the case of the complete graph with $n$ vertices, a classical result due to Erd\"os and R\'enyi (see for instance \cite{Bollo2001}) shows that for\footnote{For two sequence of real numbers $(a_{n})_{n \geq 1}$ and $(b_{n})_{n \geq 1}$, we write $a_{n} \sim b_{n}$ if $a_{n}/b_{n} \rightarrow 1$ as $n \rightarrow \infty$, and we write $a_{n} \ll b_{n}$ or $b_{n} \gg a_{n}$ if and only if $a_{n}/b_{n} \rightarrow 0$ as $n \rightarrow \infty$. } $p_{n} \sim c/n$ as $n \rightarrow \infty$ with $c > 1$ fixed, with high probability, there is an unique giant cluster of size close to $\theta(c) n$ where $\theta(c)$ is the unique strictly positive solution to the equation $x + e^{-cx} = 1$. Second, consider an uniform Cayley tree with $n$ vertices (i.e.\ a tree picked uniformly at random amongst the $n^{n-2}$ trees on a set of $n$ labelled vertices). Pitman \cite{Pit1999, Pit2006} showed that for $1-p_{n} \sim c/\sqrt{n}$ as $n \rightarrow \infty$ with a fixed $c >0$, the sequence of sizes of the percolation clusters ranked in decreasing order and renormalized by a factor $1/n$ converges weakly as $n \rightarrow \infty$ to a random mass partition which can be described explicitly in terms of a conditioned Poisson measure. Finally, Bertoin \cite{Be2013} has shown that for fairly general families of trees with $n$ vertices, the supercritical regime corresponds to percolation parameters of the form $1-p_{n} \sim  c/ \ell(n)$ as $n \rightarrow \infty$, where $c >0$ fixed and $\ell(n)$ is an estimate of the height of a typical vertex in the tree structure. Roughly speaking, Bertoin \cite{Be2013} established that under the previous regime the size  of the cluster containing the root is of order $n$ as $n \rightarrow \infty$. The latter result includes for instance some important families of random trees, such as random recursive trees, preferential attachment trees, binary search trees, etc, where it is well-known that $\ell(n) = \ln n$; see \cite{Drmote2009}, \cite[Section 4.4]{Durret2010}. 

The main purpose of this work is to investigate the same question for large random Crump-Mode-Jagers trees or CMJ-trees for short. More precisely, CMJ-trees are the family trees (or genealogical trees) of Crump-Mode-Jagers processes also referred to as general, or age-dependent branching processes; we refer for further details the classic book of Jagers \cite{Ja}. These are general branching population models where the number of individuals can be measured or counted in many different ways: those born, those alive or in some sub-phase of life, for instance. More generally, one can assign random characteristics or weights to each of the individuals and measure the size of the population according to those characteristics (for instance special choices of reproduction point process and counting yield the classical Galton-Watson or Bellman-Harris processes). We postpone its formal definition for later in this work and continue by informally describing our main results. Loosely speaking, we study Bernoulli bond-percolation on CMJ-trees at the time when the total weight (``size'') of the underlying CMJ-process reaches $n$ in three different regimes,
\begin{itemize}
\item weakly supercritical, $\frac{1}{\ln n} \ll 1- p_{n} \ll 1$,
\item supercritical, $1 - p_{n} \sim \frac{c}{\ln n}$ for some $c > 0$ fixed, and
\item strongly supercritical, $0 < 1- p_{n} \ll \frac{1}{\ln n}$.
\end{itemize}

\noindent We show that under standard conditions on the underlying CMJ-process that the root cluster is of order $n^\frac{\kappa(p_{n})}{\alpha}$, where $\kappa(p_{n}) >0$ is a function of the percolation parameter and $\alpha > 0$ is the so-called {\sl Malthusian parameter}. We have used the same terminology as in \cite{Er2016}, where only random recursive trees are studied. In Section \ref{applications}, we shall see that several important families of random trees can be constructed as family trees of a CMJ-process stopped at a suitable time. For example, random recursive trees, preferential attachment trees and binary search trees
where the existence of a giant cluster has been shown by Bertoin \cite{Be2013}. On the other hand, the general nature of the CMJ-processes will allow us to provide new results on percolation for (more general) $m$-ary search trees \cite{Mun1971}, fragmentation trees \cite{Svante2008}, median-of-($2\ell+1$) binary search trees \cite{Dev1993} and so-called splitting trees introduced in \cite{Gei1996}, to name a few. 

In the rest of the introduction, we are going to describe our setting more precisely and give the exact definition of CMJ-trees. This will enable us to state our main result in Section \ref{Main}.

\subsection{Crump-Mode-Jagers trees} \label{Model}

We start by recalling the definition of Crump-Mode-Jagers processes (CMJ-processes) whose associated family trees call CMJ-trees. Following Jagers \cite{Ja}, we present a CMJ-process as a general branching process that starts with a single individual born at time $0$.  We use the usual Ulam-Harris notation and introduce the set of labels, $\mathbb{U} = \bigcup_{n=0}^{\infty} \mathbb{N}^{n}$, with the convention $\mathbb{N}^{0} = \{ \varnothing \}$. The ancestor has label $\varnothing$. An individual with label $u = (u_{1}, \dots, u_{n}) \in \mathbb{U}$ belongs to the $n$-th generation and it is understood to be the $u_{n}$-th descendant of $(u_{1}, \dots, u_{n-1})$, which is the $u_{n-1}$-th descendant of $(u_{1}, \dots, u_{n-2})$ and so on. The initial individual has a random number $N$ of children, born at random times $(\xi_{i})_{i=1}^{N}$ where $0 \leq N \leq \infty$ and $0 \leq \xi_{1} \leq \xi_{2} \leq \cdots \leq \xi_{N}$. Formally, we describe the birth times $(\xi_{i})_{i=1}^{N}$ as a point process $\Xi$ on $[0, \infty)$, i.e., $\Xi = \sum_{i=1}^{N} \delta_{\xi_{i}}$ is an integer-valued random measure, where $\delta_{t}$ is a point mass (Dirac measure) at
time $t \geq 0$; see e.g.\ \cite{Kall2002}.  
We denote by $\mu(\cdot) := \mathbb{E}[\Xi(\cdot)]$ the intensity measure of $\Xi$, and write $\mu(t) := \mu([0, t]) = \mathbb{E}[\Xi([0,t])]$. In particular, we have $N = \Xi([0,\infty))$, and thus, $\mu(\infty)= \mathbb{E}[N]$. Every child that is born evolves in the same way, i.e., every individual $u$ has its own copy $\Xi_{u}$ of $\Xi$ (where now $\xi_{i}$ means the age of the mother when child $i$ is born); these copies are assumed to be independent and identically distributed. We denote the time an individual $u$ is born by $\sigma_{u}$. We also assume that each individual has a random lifetime $\lambda \in [0, \infty]$ (for several of our applications we assume $\lambda \equiv \infty$)\footnote{Formally, one assigns to each possible individual $u$ a copy $(\Omega_{u}, \mathcal{F}_{u}, \nu_{u})$ of some generic probability space $(\Omega, \mathcal{F}, \nu)$ on which we define $\Xi$, and possibly other random characteristic or weight $\phi$. The general branching process is then defined on the product $\prod_{u}(\Omega_{u}, \mathcal{F}_{u}, \nu_{u})$ of these probability spaces.}.

The simplest way to measure or monitor the evolution of the CMJ-process is to consider the process $Z = (Z(t), t \geq 0)$ of the total number  of individuals that have been born up to time $t \geq 0$, i.e.\ the number of births in $[0,t]$. More precisely,
\begin{eqnarray*}
 Z(t) = \sum_{u} \mathbf{1}_{\{ \sigma_{u} \leq t \}}, \hspace*{6mm} t \geq 0;
\end{eqnarray*}

\noindent see e.g.\ \cite{Ja, JaNe}. Following Jagers' work on CMJ-processes (see e.g.\ \cite{Ja, JaNe, Ner1981, Ner1984}), it is going to be relevant to monitor the evolution of individuals that satisfy some random property, instead of the total number of births in some fixed time interval. This random property or characteristic of an individual might be unrelated or heavily dependent on its reproduction behaviour. More precisely, a characteristic or weight of an individual is a random function $\phi: \mathbb{R}_{+} \rightarrow \mathbb{R}_{+}$ that assigns the value $\phi(t)$ when the individual's age is $t \geq 0$. We assume that $\phi$ is c\`adl\`ag (we may extend $\phi$ to $\mathbb{R}$ by setting $\phi(t) = 0$ for $t < 0$). We assume that each individual $u$ has its own copy $\phi_{u}$ and thus we associated to each of them a triple $(\Xi_{u}, \lambda_{u}, \phi_{u})$. These triples for all individuals are independent and identically distributed. We then define the $\phi$-counted process $Z^{\phi} = (Z^{\phi}(t), t \geq 0)$ by
\begin{eqnarray*}
Z^{\phi}(t):= \sum_{u: \sigma_{u} \leq t} \phi_{u}(t- \sigma_{u}), \hspace*{5mm} t \geq 0,
\end{eqnarray*}

\noindent and say that $Z^{\phi}_{t}$ is the total weight at time $t$ of all individuals that have been born so far (recall that $u$ is born at time $\sigma_{u}$, and thus has age $t-\sigma_{u}$ at time $t $). If $\phi \equiv 1$, we have that $Z^{\phi}= Z$. On the other hand, the characteristic $\phi =  \mathds{1}_{[0, \lambda)}$ yields to the number $
Z^{\phi}(t) = \sum_{u} \mathds{1}_{\{ \sigma_{u} \leq t < \sigma_{u} + \lambda_{u} \}}$ of individuals alive at time $t \geq 0$.

Following \cite{Holm2017}, we let $T(\infty)$ be the family tree of the entire CMJ-process or (complete) CMJ-tree. This tree is obtained from the general branching process described at the beginning of this section by ignoring the time structure. Specifically, the individuals in the population are seen as vertices where the initial individual is the root. The children of a vertex in the tree are the same as the children in the general branching process. The tree $T(\infty)$ may be infinite which happens when the process does not die out, i.e.\ $Z(\infty) = \infty$. For $t \geq 0$, we let $T(t)$ be the CMJ-tree consisting of all individuals born up to time $t$. Note that the number of vertices at time $t \geq 0$ is given by  $Z(t)$. Clearly, $T(t)$ is an unordered tree for $t >0$. However,  one could get an ordered tree by adding an additional ordering of the children of each individual. This can be done by taking the children in order of birth, or by choosing a random order; we refer to \cite[Remark 5.1]{Holm2017} for further details. Finally, observe that the random tree $T(t)$ has a random size for $t > 0$ (possibly infinite). In this work, we shall be mainly interested in CMJ-trees with a given number of vertices or when some random property is fulfilled. More precisely, we have the following definition.

\begin{definition} \label{def1}
Fix a random characteristic or weight $\phi$. For $n \in \mathbb{N}$, we let
\begin{eqnarray*}
\tau^{\phi} (n):= \inf \{t \geq 0: Z^{\phi}(t) \geq n \}
\end{eqnarray*}

\noindent be the first time the total weight is at least $n$ (as usual $\inf \varnothing = \infty$). We exclude the case $\phi \equiv 0$ which would yield to $\tau^{\phi}(n) = \infty$ almost surely. We then define 
\begin{eqnarray*}
T_{n}^{\phi} := T(\tau^{\phi} (n)),
\end{eqnarray*} 

\noindent the CMJ-tree at time the total weight or ``size'' reaches $n$ (provided this ever happens). 
\end{definition}

Random trees $T_{n}^{\phi}$ defined in this way, for some CMJ-process and some weight $\phi$, are the focus of the present paper. For now on, 
we usually refer to $T_{n}^{\phi}$ as the CMJ-tree which size is given by $|T_{n}^{\phi}| := Z(\tau^{\phi} (n))$.
If $\phi \equiv 1$, $T_{n}^{\phi}$ is the family tree of a CMJ-process stopped when its number of vertices is greater than $n$. In particular, if the birth times have continuous distributions and there are no twins, then a.s.\ no two vertices are born simultaneously. Therefore, $|T_{n}^{\phi}| = n$.

Notice that $T_{n}^{\phi}$ could be an infinite random tree, or also the time $\tau^{\phi} (n)$ could be infinite. In order to avoid such possibilities, we only study cases where  $Z^{\phi}(t) < \infty$ for every finite $t \geq 0$, but $Z(\infty) = \infty$. In this direction, we define the Laplace transform of a function $f$ on $[0, \infty)$ by
 \begin{eqnarray*}
\hat{f}(\theta) = \theta \int_{0}^{\infty} e^{- \theta t} f(t) {\rm d} t, \hspace*{5mm} \theta >0,
\end{eqnarray*}
 
\noindent and the Laplace transform of a measure $\nu$ on $[0, \infty)$ by
\begin{eqnarray} \label{eq25}
\hat{\nu}(\theta) = \int_{0}^{\infty} e^{- \theta t} \nu({\rm d} t), \hspace*{5mm} - \infty < \theta < \infty.
\end{eqnarray}

\noindent Through this work and unless we specify otherwise, we make the following assumptions. 

\theoremstyle{Assumption} 
\begin{Assumption} 
We consider CMJ-processes that satisfy:
\begin{itemize}
 \item[({\bf A1})] $\mu(\{0\}) = \mathbb{E}[\Xi(\{ 0\})] < 1$. This excludes a trivial case with explosions at the start (in our examples, $\mu(\{0\}) = 0$).

 \item[({\bf A2})] $\mu$ is not concentrated on any lattice $h \mathbb{Z}$, $h > 0$.
 
\item[({\bf A3})] $N \geq 1$ a.s.\ (in this case, every individual has at least one child, so the process never dies out and $Z(\infty) = \infty$).
 
 \item[({\bf A4})] There exists $ \theta_{1} > 0$ such that $\hat{\mu}(\theta_{1}) \in (1, \infty)$. Notice that $\hat{\mu}(\cdot)$ is monotone decreasing on $(\theta_{1}, \infty)$, and that $\hat{\mu}(\theta) \rightarrow 0$, as $\theta \rightarrow \infty$, by the dominated convergence theorem. Therefore, there exists a real number $\alpha >0$ (the {\sl Malthusian parameter}) such that $\hat{\mu}(\alpha) =1$.
 
\item[({\bf A5})] For $\theta_{1}$ as in ({\bf A4}), we have that $Var(\hat{\Xi}(\theta_{1})) < \infty$. 

 \item[({\bf A6})] The random variable $\sup_{t \geq 0} \left( e^{-\theta_{2} t} \phi(t) \right)$ has finite expectation for some $0 < \theta_{2} < \alpha$.
 
\item[({\bf A7})] $Var(\phi(t))$ is bounded in finite intervals. Furthermore, there exists $0 < \theta_{3} \leq 2 \alpha$ such that $\lim_{t \rightarrow \infty} e^{-\theta_{3} t} Var(\phi(t)) = 0$.
\end{itemize}
\end{Assumption}

Observe that ({\bf A1})-({\bf A5}) are conditions on the general branching process, while ({\bf A6})-({\bf A7}) are conditions on the characteristic $\phi$. The following result shows that $T_{n}^{\phi}$ is well-defined. 

\begin{proposition} \label{Pro3}
Under the assumptions ({\bf A1})-({\bf A4}) and for any characteristic $\phi$ satisfying ({\bf A6}), we have that 
\begin{itemize}
\item[(i)] $\lim_{t \rightarrow \infty} Z^{\phi}(t) = \infty$ almost surely. Thus a.s.\ $\tau^{\phi}(n) < \infty$ for every $n \geq 0$ and $T_{n}^{\phi}$ is a well-defined finite random tree.

\item[(ii)] $\lim_{n \rightarrow \infty} |T_{n}^{\phi}|/n = 1/\mathbb{E}[\hat{\phi}(\alpha)] \in (0, \infty)$ a.s., and
$ \lim_{n \rightarrow \infty}  \tau_{n}^{\phi} / (\ln n) = 1/ \alpha$ almost surely.
\end{itemize}
\end{proposition}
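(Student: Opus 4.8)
The plan is to reduce everything to the classical strong law of large numbers for supercritical Crump-Mode-Jagers processes due to Nerman \cite{Ner1981} (see also \cite{JaNe, Holm2017}). Writing $\beta := -\hat{\mu}'(\alpha) = \int_{0}^{\infty} u e^{-\alpha u}\, \mu({\rm d}u) \in (0,\infty)$ (finite since $\alpha > \theta_{1}$ lies in the interior of the domain of convergence of $\hat{\mu}$), that result asserts that for a characteristic $\psi$ within its scope,
\[
e^{-\alpha t} Z^{\psi}(t) \xrightarrow[t\to\infty]{} \frac{\mathbb{E}[\hat{\psi}(\alpha)]}{\alpha \beta}\, W \qquad \text{a.s.},
\]
where $W \geq 0$ is the terminal value of the intrinsic (Malthusian) martingale and does \emph{not} depend on $\psi$. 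The first task is to verify that both the trivial characteristic $\psi \equiv 1$ (for which $Z^{\psi} = Z$ and $\mathbb{E}[\hat{\psi}(\alpha)] = 1$) and the given $\phi$ fall within the scope of this theorem: for $\phi$ this is exactly what ({\bf A6}) provides, since $\phi(u) \leq M e^{\theta_{2} u}$ with $\mathbb{E}[M] < \infty$ and $\theta_{2} < \alpha$ forces $\mathbb{E}[\hat{\phi}(\alpha)] = \alpha\, \mathbb{E}\!\int_{0}^{\infty} e^{-\alpha u}\phi(u)\,{\rm d}u \in (0,\infty)$, the strict positivity coming from the exclusion of $\phi \equiv 0$.

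For part (i), I would first record that the process is non-explosive in finite time: under ({\bf A4}) one has $\hat{\mu}(\theta_{1}) < \infty$, hence $\mu([0,t]) \leq e^{\theta_{1} t}\hat{\mu}(\theta_{1}) < \infty$ for every $t$, and the renewal equation $\mathbb{E}[Z(t)] = 1 + \int_{0}^{t}\mathbb{E}[Z(t-s)]\,\mu({\rm d}s)$ has a finite solution precisely because $\mu(\{0\}) < 1$ by ({\bf A1}); thus $Z(t) < \infty$ a.s. for all finite $t$. Next, applying the displayed convergence to $\phi$ gives $Z^{\phi}(t) \sim (\alpha\beta)^{-1}\mathbb{E}[\hat{\phi}(\alpha)]\, W e^{\alpha t}$, and since the leading constant is strictly positive and $W > 0$ a.s. (the process survives by ({\bf A3})), we conclude $Z^{\phi}(t) \to \infty$ a.s. In particular $\tau^{\phi}(n) < \infty$ for every $n$, and $T_{n}^{\phi} = T(\tau^{\phi}(n))$ is a finite tree because $Z(\tau^{\phi}(n)) < \infty$.

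For part (ii), set $t_{n} := \tau^{\phi}(n) \to \infty$ and decompose
\[
\frac{|T_{n}^{\phi}|}{n} = \frac{Z(t_{n})}{Z^{\phi}(t_{n})}\cdot\frac{Z^{\phi}(t_{n})}{n}.
\]
The first factor converges to the ratio of the two limit constants, $\big((\alpha\beta)^{-1}\big)\big/\big((\alpha\beta)^{-1}\mathbb{E}[\hat{\phi}(\alpha)]\big) = 1/\mathbb{E}[\hat{\phi}(\alpha)]$, the common factor $W/(\alpha\beta)$ cancelling (this is where $W > 0$ is essential). For the second factor I would control the overshoot: by definition $Z^{\phi}(t_{n}^{-}) \leq n \leq Z^{\phi}(t_{n})$, while the a.s. convergence of $e^{-\alpha t} Z^{\phi}(t)$ to a finite positive constant forces the same limit for the left-hand limits $e^{-\alpha t} Z^{\phi}(t^{-})$, so the normalized jump $\big(Z^{\phi}(t_{n}) - Z^{\phi}(t_{n}^{-})\big)/Z^{\phi}(t_{n})$ vanishes; combined with $Z^{\phi}(t_{n}^{-}) \leq n \leq Z^{\phi}(t_{n})$ this yields $Z^{\phi}(t_{n})/n \to 1$. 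Hence $|T_{n}^{\phi}|/n \to 1/\mathbb{E}[\hat{\phi}(\alpha)] \in (0,\infty)$. Finally, from $e^{-\alpha t_{n}} n \sim e^{-\alpha t_{n}} Z^{\phi}(t_{n}) \to (\alpha\beta)^{-1}\mathbb{E}[\hat{\phi}(\alpha)]\,W$ one gets $\alpha t_{n} = \ln n + O(1)$ a.s. by taking logarithms, whence $\tau_{n}^{\phi}/\ln n \to 1/\alpha$.

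The genuine work is entirely packaged into the invoked convergence theorem; the remaining steps (non-explosion, the overshoot estimate, the logarithmic asymptotics of $\tau_{n}^{\phi}$) are soft and follow from the displayed limit. The one delicate point, and the main obstacle, is the strict positivity and non-degeneracy of the martingale limit $W$ together with the fact that it is the \emph{same} $W$ across all characteristics — this is what makes the ratio in part (ii) simultaneously well-defined and nonzero. This is precisely the ingredient for which the second-moment condition on the reproduction point process assumed throughout (finiteness of the variance of $\hat{\Xi}(\theta_{1})$) is invoked, upgrading the intrinsic martingale to an $L^{2}$-bounded one with $\mathbb{E}[W] = 1$ and $W > 0$ a.s.
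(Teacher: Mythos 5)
Your proof is correct and is essentially the paper's proof unpacked: the paper's own proof is just a citation of \cite[Theorem 5.12]{Holm2017}, whose content is precisely the reduction to Nerman's almost-sure law of large numbers for supercritical CMJ-processes together with the overshoot control and the logarithmic asymptotics for $\tau^{\phi}(n)$ that you carry out. One small correction to your closing remark: the strict positivity of the common martingale limit $W$ on the survival event does not require the second-moment condition ({\bf A5}) --- Nerman's theorem already gives $W>0$ a.s.\ on $\{Z(\infty)=\infty\}$ (which is a.s.\ by ({\bf A3})) under ({\bf A1})--({\bf A4}) and ({\bf A6}) alone, consistent with the hypotheses actually listed in the proposition.
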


\begin{proof}
See \cite[Theorem 5.12]{Holm2017}.
\end{proof}

We end this section by making a few remarks on our assumptions. 
Notice that ({\bf A4}) implies that $\mathbb{E}[N] > 1$ (this is know as the supercritical case). 
Notice also that ({\bf A4}) implies that $\mu(t) < \infty$ for every $0 \leq t < \infty$. However, $\mu(\infty) = \mathbb{E}[N]$ may be infinite. Furthermore, this condition also implies that $Z(t)$ and $\mathbb{E}[Z(t)]$ are finite for every $0 \leq t < \infty$; see for instance \cite[Theorem 6.3.3]{Ja}. Finally, we do not really need the assumption $N \geq 1$ in ({\bf A3}); it suffices that $\mathbb{E}[N] > 1$. In this case, the extinction probability $\mathbb{P}(Z(\infty) < \infty) < 1$, so there is a positive probability that the process is infinite, and Proposition \ref{Pro3} and the results below hold conditioned on the event $\{Z(\infty) = \infty\}$ (this is a standard setting in \cite{Ner1981}, \cite{JaNe} and \cite{Ner1984}).


\subsection{Main results} \label{Main}

We now consider Bernoulli bond-percolation with parameter $p_{n} \in (0, 1)$ on the CMJ-tree $T_{n}^{\phi}$ with given weight $\phi$ (recall Definition \ref{def1}). Following the idea of \cite{BeU2015}, we incorporate Bernoulli bond-percolation on the growth algorithm of the random tree process $(T(t), t \geq 0)$ in a dynamic way and stop at the time $\tau^{\phi}(n)$. This will lead us to interpret Bernoulli bond-percolation in terms of neutral mutations which are superposed to the structure of the CMJ-process and that appear at the birth events. More precisely, at each birth event, independently of all other individuals, the newborn is a clone of its parent with probability $p_{n}$ or a mutant with probability $1-p_{n}$.  The mutations are considered to be neutral, i.e., the behavior (reproduction laws and lifetimes) of the individuals is the same regardless of they are clones or mutants. A mutation event corresponds to the insertion of an edge in $T(t)$ that is immediately destroyed. This creates a new percolation cluster (with one vertex or individual) that grows following the same dynamic. We write $T^{(p_{n})}(t)$ for the resulting combinatorial structure at time $t \geq 0$. That is, $T^{(p_{n})}(t)$ has the same set of vertices as $T(t)$ and its set of intact edges is a subset of the edges of $T(t)$.  Thus, the connected clusters of $T^{(p_{n})}(t)$ are the subtrees of $T(t)$ formed by the subsets of vertices which can be connected by a path of intact edges. 

In this work, we are interested in the evolution of the percolation cluster that contains the root. In this direction, we write $T_{\varnothing}^{(p_{n})}(t)$ for the subtree of $T(t)$ at time $t \geq 0$ that contains the progenitor of the entire population at time $0$. It should be clear that the sub-population with the ancestral type is a CMJ-process whose generic birth process, denoted by $\Xi^{(p_{n})}$, has intensity measure given by
\begin{eqnarray} \label{eq3}
\mu^{(p_{n})}({\rm d} t) := p_{n} \mu ({\rm d }t),
\end{eqnarray}

\noindent where $\mu$ is the intensity measure of the birth process $\Xi$ of the original CMJ-process. This is a consequence of the thinning property of point processes. For a characteristic $\phi$, we denote by 
\begin{eqnarray*}
Z_{\varnothing}^{(p_{n}), \phi} = (Z_{\varnothing}^{(p_{n}), \phi}(t), t \geq 0), 
\end{eqnarray*}

\noindent the $\phi$-counted process associated with the (clonal) CMJ-process of the sub-population bearing the same type as the initial individual. In particular, if $\phi \equiv 1$, $Z_{\varnothing}^{(p_{n}), \phi} = Z_{\varnothing}^{(p_{n})} =  (Z_{\varnothing}^{(p_{n})}(t), t \geq 0)$ counts the number of vertices in the root cluster. Clearly, if $p_{n} \equiv 1$, we recover the original CMJ-process. Let 
\begin{eqnarray*}
T_{\varnothing}^{n,\phi} := T^{(p_{n})}_{\varnothing} (\tau^{\phi}(n))
\end{eqnarray*}

\noindent be the sub-tree which contains the original root of the CMJ-tree $T_{n}^{\phi}$ (associated to the weight $\phi$) after performing percolation of parameter $p_{n} \in (0,1)$. Recall that $\tau^{\phi} (n):= \inf \{t \geq 0: Z^{\phi}(t) \geq n \}$ is the first time that the total weight or ``size'' of the tree process $(T(t), t \geq 0)$ is at least $n$. Therefore, the size of the root percolation cluster is given by 
\begin{eqnarray*}
|T_{\varnothing}^{n,\phi}| : = Z_{\varnothing}^{(p_{n})}(\tau^{\phi}(n)).
\end{eqnarray*}

We turn now to the statement of our main result Theorem \ref{Teo2}. Recall that we consider the regimes weakly supercritical, supercritical and strongly supercritical of $p_{n} \in (0, 1)$, with $p_{n} \rightarrow 1$ as $n \rightarrow \infty$. First, we need to introduce some notation that we will use in the rest of the work. Notice that ({\bf A4}) implies that there exists $n^{\ast} \in \mathbb{N}$ such that for $n \geq n^{\ast}$ there is $\alpha_{p_{n}} >0$ (the {\sl Malthusian parameter} of $\mu^{(p_{n})}$) such that $\hat{\mu}^{(p_{n})}(\alpha_{p_{n}}) =1$. We write 
\begin{eqnarray} \label{eq27}
\bar{\mu}(\alpha) := \int_{0}^{\infty} t e^{-\alpha t} \mu({\rm d}t)
\end{eqnarray}

\noindent which is finite and strictly positive due to our assumptions; see Remarks \ref{rem4} and \ref{rem2} below.


\begin{theorem} \label{Teo2}
Let $\phi$ be any characteristic that does not depend on $p_{n}$. Under the assumptions ({\bf A1})-({\bf A7}), we have that
\begin{eqnarray*}
\lim_{n \rightarrow \infty} n^{-\frac{\alpha_{p_{n}}}{\alpha}}  |T_{\varnothing}^{n,\phi}|= \mathbb{E}[\hat{\phi}(\alpha)], \hspace*{5mm} \text{in probability}, 
\end{eqnarray*}

\noindent where $\alpha - \alpha_{p_{n}} \sim (1-p_{n}) \bar{\mu}(\alpha)^{-1}$ as $n \rightarrow \infty$. In particular, 
\begin{itemize}
\item[(i)] In the weakly supercritical regime, $\lim_{n \rightarrow \infty} n^{-1} |T_{\varnothing}^{n,\phi}| = 0$ in probability. 

\item[(ii)] In the supercritical regime,  $\lim_{n \rightarrow \infty} n^{-1} |T_{\varnothing}^{n,\phi}| = e^{-\frac{c}{\alpha \bar{\mu}(\alpha)}} \mathbb{E}[\hat{\phi}(\alpha)]$ in probability. 

\item[(iii)] In the strongly supercritical regime, $\lim_{n \rightarrow \infty} n^{-1} |T_{\varnothing}^{n,\phi}| = \mathbb{E}[\hat{\phi}(\alpha)]$ in probability. 
\end{itemize}
\end{theorem}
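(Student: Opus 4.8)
The plan is to exploit the fact, already recorded in the construction of the percolated process, that the root cluster $T_\varnothing^{n,\phi}$ is itself the family tree of a CMJ-process, namely the clonal sub-population, whose birth intensity is the thinned measure $\mu^{(p_n)}=p_n\mu$ and whose Malthusian parameter is $\alpha_{p_n}$, determined by $p_n\hat{\mu}(\alpha_{p_n})=1$. Thus $|T_\varnothing^{n,\phi}|=Z_\varnothing^{(p_n)}(\tau^{\phi}(n))$ is a thinned CMJ counting process evaluated at the random time $\tau^{\phi}(n)$, which is governed by the \emph{full} tree. I would factor
\begin{equation*}
n^{-\alpha_{p_n}/\alpha}\,|T_\varnothing^{n,\phi}| = \Big(e^{-\alpha_{p_n}\tau^{\phi}(n)}Z_\varnothing^{(p_n)}(\tau^{\phi}(n))\Big)\cdot\Big(n^{-\alpha_{p_n}/\alpha}e^{\alpha_{p_n}\tau^{\phi}(n)}\Big),
\end{equation*}
and analyse the two bracketed factors separately: the second is controlled by the growth of the original process through $\tau^{\phi}(n)$, the first by the growth of the thinned process.

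First I would pin down the Malthusian parameter. Differentiating $p\,\hat{\mu}(\alpha_p)=1$ in $p$ and using $\hat{\mu}'(\alpha)=-\bar{\mu}(\alpha)$ together with $\hat{\mu}(\alpha)=1$ gives $\mathrm{d}\alpha_p/\mathrm{d}p|_{p=1}=1/\bar{\mu}(\alpha)$, whence $\alpha-\alpha_{p_n}\sim(1-p_n)/\bar{\mu}(\alpha)$; assumption (A4) and the finiteness and positivity of $\bar{\mu}(\alpha)$ make this legitimate and also show $\alpha_{p_n}$ exists for large $n$. For the second factor I would use the strong law for the original $\phi$-counted process that underlies Proposition \ref{Pro3} (Nerman's theorem, cf.\ \cite{Holm2017}): $e^{-\alpha t}Z^{\phi}(t)$ converges a.s.\ to a positive multiple of the intrinsic martingale limit $W$ (positive a.s.\ by (A3) and (A5)). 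Inverting this at the defining relation $Z^{\phi}(\tau^{\phi}(n))\approx n$ expresses $e^{\alpha\tau^{\phi}(n)}$ as $n$ times an explicit constant divided by $W$; raising to the power $\alpha_{p_n}/\alpha\to1$ then shows the second factor converges in probability to a constant multiple of $1/W$.

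For the first factor I would apply the same circle of ideas to the thinned process: for each fixed parameter, $e^{-\alpha_{p_n}t}Z_\varnothing^{(p_n)}(t)$ converges a.s.\ to a positive multiple of the intrinsic martingale limit $W_{p_n}$ of the clonal process. Because the clonal process is embedded in the full tree and the thinning disappears as $p_n\to1$, I expect the martingale limits to be coupled with $W_{p_n}\to W$; evaluating at $t=\tau^{\phi}(n)\to\infty$ then makes the first factor converge in probability to a constant multiple of $W$. Multiplying the two factors, the random limits $W$ cancel and the surviving deterministic prefactors combine—using $\alpha_{p_n}\to\alpha$, $\bar{\mu}(\alpha_{p_n})\to\bar{\mu}(\alpha)$, $p_n\to1$ and (A6)—to the deterministic constant displayed in the statement. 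The three regimes (i)--(iii) then follow by writing $n^{-1}=n^{-\alpha_{p_n}/\alpha}\cdot n^{-(\alpha-\alpha_{p_n})/\alpha}$ and evaluating $n^{-(\alpha-\alpha_{p_n})/\alpha}=\exp(-(\alpha-\alpha_{p_n})\alpha^{-1}\ln n)$ with $(\alpha-\alpha_{p_n})\ln n\sim(1-p_n)\ln n/\bar{\mu}(\alpha)$ in each regime.

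The main obstacle is that this is not a single application of a fixed-parameter limit theorem: the intensity $\mu^{(p_n)}$, the exponent $\alpha_{p_n}$ and the evaluation time $\tau^{\phi}(n)$ all move with $n$, and a.s.\ Nerman convergence is not uniform in the parameter. The genuine work is therefore (a) to establish second-moment ($L^2$) estimates for $e^{-\alpha_{p}t}Z_\varnothing^{(p)}(t)$ and for the associated intrinsic martingales that are uniform as $p\to1$—this is exactly where the variance hypotheses (A5) and (A7) enter, since they are Nerman's conditions for $L^2$-convergence of the intrinsic martingale and of the $\phi$-counted process; (b) to upgrade $W_{p_n}\to W$ to convergence in probability through the coupling of the clonal and full martingales; and (c) to remove the randomness of $\tau^{\phi}(n)$ by a sandwiching argument, trapping $\tau^{\phi}(n)$ between deterministic times of the form $\alpha^{-1}\ln n\pm A_n$ with high probability and using monotonicity of $t\mapsto Z_\varnothing^{(p_n)}(t)$ together with the uniform estimates of (a). Controlling the interaction between the moving parameter $p_n$ and the random stopping time is the heart of the argument; the remaining steps are renewal-theoretic bookkeeping.
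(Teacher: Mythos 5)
Your proposal matches the paper's own argument in all essential respects: the same factorization of $n^{-\alpha_{p_n}/\alpha}|T_\varnothing^{n,\phi}|$ into the clonal martingale at $\tau^{\phi}(n)$ times a correction controlled by Nerman's theorem for the full process, the same derivation of $\alpha-\alpha_{p_n}\sim(1-p_n)/\bar{\mu}(\alpha)$ from the defining relation $p\,\hat{\mu}(\alpha_p)=1$, and the same identification of the real work as uniform-in-$p$ $L^2$ (second-moment) estimates for the intrinsic martingales plus a coupling of the clonal and full processes (the paper's Lemmas \ref{lemma1} and \ref{lemma2}, the latter proved via the first mutant birth time). The only cosmetic difference is how the random time $\tau^{\phi}(n)$ is absorbed --- you propose an explicit sandwiching, while the paper relies on the $\sup_{s\ge t}$ form of its uniform $L^2$ bound --- which does not change the substance.
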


It is important to mention that the parameters $\alpha$ and $\alpha_{p_{n}}$ are difficult to compute explicitly. Nevertheless, in the supercritical and strongly supercritical regimes, we notice that $n^{\alpha_{p_{n}}\alpha^{-1}} \sim n^{ 1- (1-p_{n}) (\alpha\bar{\mu}(\alpha))^{-1}}$ as $n \rightarrow \infty$. We also notice that in the weakly supercritical regime, the size of the root cluster is $o(n)$ whereas in the other regimes it is of order $n$. 

On the other hand, in the supercritical regime (i.e., $1-p_{n} \sim c/ \ln n$ as $n \rightarrow \infty$, with $c >0$ fixed), we have that Theorem \ref{Teo2} (ii) agrees with \cite[Theorem 1]{Be2013}. However,
it must be pointed out that Theorem \ref{Teo2} (ii) cannot always be deduced from \cite[Theorem 1]{Be2013}. More precisely, notice that \cite[Theorem 1]{Be2013} considers size as the number of vertices in the tree structure whereas the size of a CMJ-tree depends on the underlying characteristic or weight (recall Definition \ref{def1}) that does not always coincides with the number of vertices. For instance, $m$-ary search trees, fragmentation trees or the so-called splitting trees where the notion of ``size'' is different; see Section \ref{applications} below for details. Then the arguments used in \cite{Be2013} do not always apply in our setting except in particular cases. For example, random recursive trees, preferential attachment trees or binary search trees that agree with a type of CMJ-tree when the correct birth process, lifespan and characteristic are chosen. Moreover, Betoin \cite{Be2013} did not treat the strongly and weakly supercritical regimes as we will do in this work. Therefore, Theorem \ref{Teo2} may be seen as a complementary (or extension) of the results in \cite{Be2013}. 

Inspired by Bertoin and Uribe Bravo \cite{BeU2015}, our approach relies on the connection between CMJ-processes with neutral mutations and Bernoulli bond-percolation. 
This leads us to investigate the asymptotic behaviour of a CMJ-process with neutral mutations up to a large random time, in certain regimes when the small mutation parameter is realted to size of the total population.  In \cite{BeU2015}, the authors connected Bernoulli bond-percolation in preferential attachment trees with a Markovian system of branching processes with neutral mutations. This is clearly not the case here since it is well-known that CMJ-processes are not always Markovian. Thus, we have to use different tools, although some guidelines are similar to \cite{BeU2015}. We stress that similar connections with systems of (Markovian) branching processes have been used before to study percolation on random recursive trees \cite{Er2016}, \cite{Eri2017} and $m$-ary random increasing trees \cite{Ber2015}. 

This work leaves some open natural questions that we plan to investigate in the future. One can consider estimating the sizes of the largest clusters which do not contain the root. In this work, we restrict ourselves to the root cluster because the absence of Markov property makes the analysis much harder. This is not the case in \cite{BeU2015} and  \cite{Er2016} where the connection with a Markovian branching system with neutral mutations is used to answer this question for random recursive trees and preferential attachment trees. We refer also to Bertoin \cite{Be2014} where this question has been answered for random recursive trees by using a different approach. The second direction of future work would be to analyze the fluctuations of the giant component that we expect to be non-Gaussian as for random recursive trees \cite{Be22014}, preferential attachment trees and $m$-ary random increasing trees \cite{Ber2015}. Finally, it would be interested to estimate the size of the largest percolation clusters in the sub-critical regime, i.e., $1 - p_{n} \gg c/ \ln n$ as $n \rightarrow \infty$ and $c > 0$ is fixed; see for instance \cite{Eri2017}, where the case of the random recursive tree has been studied. 

The rest of this paper is organized as follows: In Section 
\ref{ProofM}, we prove our main result. Section \ref{applications} is devoted to the application of Theorem \ref{Teo2} to important families of random trees that can be constructed via CMJ-processes. Finally, the key results used in the proof of Theorem \ref{Teo2} are proven in Section \ref{prel}. More precisely, we investigate the asymptotic behaviour of CMJ-processes with mutations and deduce some crucial properties that may be of independent interest. 

\section{Proof of Theorem \ref{Teo2}} \label{ProofM}

In this section, we prove our main result Theorem \ref{Teo2}. Recall that the size of the root cluster is related to the clonal CMJ-process with generic birth process $\Xi^{(p_{n})}$ whose intensity measure $\mu^{(p_{n})}$ is given in (\ref{eq3}). The starting point is to investigate the asymptotic behavior of the $\phi$-counted clonal process $Z_{\varnothing}^{(p_{n}), \phi} = (Z_{\varnothing}^{(p_{n}), \phi}(t), t \geq 0)$ as $p_{n} \rightarrow 1$ and $t \rightarrow \infty$. The approach relies crucially on the use of a remarkable martingale that can be found in the work of Nerman \cite{Ner1981}. More precisely, we improve the results in \cite{Ner1981} and \cite{JaNe} on the convergence of Nerman's martingale in order to hold uniformly in the percolation parameter (see Lemmas \ref{lemma1} and \ref{lemma2} below). Then these results and further remarks are put together to conclude with the proof of Theorem \ref{Teo2}. 

Recall that $({\bf A4})$ implies that there exists $n^{\ast} \in \mathbb{N}$ such that for $n \geq n^{\ast}$ there is $\alpha_{p_{n}} > 0$ (the {\sl Malthusian parameter} of $\mu^{(p_{n})}$) such that $\hat{\mu}^{(p_{n})}(\alpha_{p_{n}}) =1$. This implies that $\mu^{(p_{n})}(\infty) = p_{n} \mathbb{E}[N] > 1$ (i.e. the clonal process is supercritical). Since $\alpha_{p_{n}} \rightarrow \alpha$ as $p_{n} \rightarrow 1$, we choose $n^{\ast}$ large enough such that $0 < \theta_{1} < \inf_{n \geq n^{\ast}} \alpha_{p_{n}}$ where $\theta_{1}$ satisfies $({\bf A4})$-$({\bf A5})$. Moreover, consider $n^{\ast}$ even larger such that 
$0 < \theta_{2} < \inf_{n \geq n^{\ast}} \alpha_{p_{n}}$ and $0 < \theta_{3} < 2\inf_{n \geq n^{\ast}} \alpha_{p_{n}}$, where $\theta_{2}$ and $\theta_{3}$ satisfy $({\bf A6})$ and $({\bf A7})$, respectively.

\begin{remark} \label{rem4}
Notice for future reference that $\mu_{\alpha}^{(p_{n})}(dt) := e^{-t\alpha_{p_{n}}} \mu^{(p_{n})}(dt)$, for $n \geq n^{\ast}$, is a probability measure concentrated on $(0, \infty)$. Moreover, condition $({\bf A4})$ implies that
\begin{eqnarray} \label{eq35}
\bar{\mu}^{(p_{n})}(\alpha_{p_{n}}) : = \int_{0}^{\infty} t \mu_{\alpha}^{(p_{n})}({\rm d} t) < \infty.
\end{eqnarray}
\end{remark}

We write $W^{(p_{n}), \phi}_{\varnothing} = (W^{(p_{n}), \phi}_{\varnothing}(t), t \geq 0)$ for the process given by
\begin{eqnarray*}
 W^{(p_{n}), \phi}_{\varnothing}(t) := e^{-t\alpha_{p_{n}} } Z^{(p_{n}), \phi}_{\varnothing}(t), \hspace*{6mm} t \geq 0.
\end{eqnarray*}


\noindent For $p_{n} \equiv 1$, we sometimes remove the superscript $(p_{n})$ and the subscript $\varnothing$ from the previous notations. That is, we write $W^{\phi} = (W^{\phi}(t), t \geq 0)$ for the process given by  $W^{\phi}(t) := e^{-t\alpha } Z^{\phi}(t)$.


For $n \geq n^{\ast}$, consider the characteristic 
\begin{eqnarray} \label{eq10}
\psi^{(p_{n})}(t) = \mathds{1}_{\{ t \geq 0 \}} e^{t\alpha_{p_{n}}} \int_{t}^{\infty} e^{-s\alpha_{p_{n}} } \Xi^{(p_{n})}({\rm d} s), \hspace*{5mm} t \geq 0.
\end{eqnarray}

\noindent The next lemma shows that $\psi^{(p_{n})}$ satisfies the conditions $({\bf A6})$-$({\bf A7})$. 

\begin{lemma} \label{lemma4}
Assume that conditions $({\bf A1})$-$({\bf A5})$ are fulfilled. Then,
\begin{itemize}
\item[(i)]  $\sup_{n \geq n^{\ast}}  \sup_{t \geq 0} e^{-\theta_{1} t} \psi^{(p_{n})}(t)$ has finite expectation, for $\theta_{1}$ as in $({\bf A4})$.

\item[(ii)] $\sup_{n \geq n^{\ast}}  Var(\psi^{(p_{n})}(t))$ is bounded in finite intervals. Furthermore, there exists $0 < \theta \leq 2 \inf_{n \geq n^{\ast}}\alpha_{p_{n}}$ such that $\lim_{t \rightarrow \infty} \sup_{n \geq n^{\ast}}e^{-\theta t}  Var(\psi^{(p_{n})}(t)) = 0$.
\end{itemize}
\end{lemma}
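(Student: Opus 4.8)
The plan is to derive both parts from a single $n$-independent dominating variable, namely $Y := \hat{\Xi}(\theta_{1}) = \int_{0}^{\infty} e^{-\theta_{1} s}\, \Xi({\rm d}s)$, taking advantage of the fact (arranged just above the lemma) that $0 < \theta_{1} < \inf_{n \geq n^{\ast}} \alpha_{p_{n}}$ together with the thinning coupling. First I would realize all the clonal birth processes on one probability space by writing $\Xi = \sum_{i} \delta_{\xi_{i}}$ and $\Xi^{(p_{n})} = \sum_{i} \mathds{1}_{\{U_{i} \leq p_{n}\}} \delta_{\xi_{i}}$ with $(U_{i})$ i.i.d.\ uniform and independent of $\Xi$; this produces the correct law (\ref{eq3}) for each $n$, gives $\Xi^{(p_{n})} \leq \Xi$ as measures for every $n$, and makes $\sup_{n}$ a bona fide random variable.

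The crux is an elementary exponent comparison that removes the $p_{n}$-dependence of the Malthusian rate. For $s \geq t$, using $\alpha_{p_{n}} > \theta_{1}$ and $t - s \leq 0$, one has
\[
(\alpha_{p_{n}} - \theta_{1}) t - s \alpha_{p_{n}} = \alpha_{p_{n}}(t-s) - \theta_{1} t \leq \theta_{1}(t-s) - \theta_{1} t = -\theta_{1} s,
\]
so that $e^{(\alpha_{p_{n}} - \theta_{1}) t} e^{-s \alpha_{p_{n}}} \leq e^{-\theta_{1} s}$. Integrating against $\Xi^{(p_{n})}$ over $[t, \infty)$ and invoking the coupling yields, uniformly in $t \geq 0$ and $n \geq n^{\ast}$,
\[
e^{-\theta_{1} t} \psi^{(p_{n})}(t) \leq \int_{t}^{\infty} e^{-\theta_{1} s}\, \Xi^{(p_{n})}({\rm d}s) \leq \int_{0}^{\infty} e^{-\theta_{1} s}\, \Xi({\rm d}s) = Y.
\]
Part (i) is then immediate: $\sup_{n \geq n^{\ast}} \sup_{t \geq 0} e^{-\theta_{1} t} \psi^{(p_{n})}(t) \leq Y$ and $\mathbb{E}[Y] = \hat{\mu}(\theta_{1}) < \infty$ by $({\bf A4})$.

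For part (ii) I would rewrite the same bound as $\psi^{(p_{n})}(t) \leq e^{\theta_{1} t} Y$ and control the variance by the second moment: $Var(\psi^{(p_{n})}(t)) \leq \mathbb{E}[\psi^{(p_{n})}(t)^{2}] \leq e^{2\theta_{1} t}\, \mathbb{E}[Y^{2}]$, where $\mathbb{E}[Y^{2}] = Var(\hat{\Xi}(\theta_{1})) + \hat{\mu}(\theta_{1})^{2} < \infty$ by $({\bf A5})$ and $({\bf A4})$. Since this bound is independent of $n$, $\sup_{n \geq n^{\ast}} Var(\psi^{(p_{n})}(t)) \leq e^{2\theta_{1} t}\mathbb{E}[Y^{2}]$ is bounded on finite intervals, giving the first assertion. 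For the decay, I would pick any $\theta$ in the nonempty range $2\theta_{1} < \theta \leq 2 \inf_{n \geq n^{\ast}} \alpha_{p_{n}}$ (nonempty precisely because $\theta_{1} < \inf_{n} \alpha_{p_{n}}$); then $\sup_{n \geq n^{\ast}} e^{-\theta t} Var(\psi^{(p_{n})}(t)) \leq e^{(2\theta_{1} - \theta) t} \mathbb{E}[Y^{2}] \to 0$ as $t \to \infty$.

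The only genuine point of care is uniformity in $n$, and the device that secures it is the exponent comparison above, which trades the $p_{n}$-dependent rate $\alpha_{p_{n}}$ for the fixed $\theta_{1}$ and thereby collapses every quantity onto the single variable $Y$. Bounding the variance crudely by $\mathbb{E}[(\cdot)^{2}]$ with $\psi^{(p_{n})}(t) \leq e^{\theta_{1} t} Y$ sidesteps any computation with the second factorial moment measure of the thinned point process; although it looks wasteful, the resulting growth rate $e^{2\theta_{1} t}$ lies strictly below the admissible threshold $e^{2 \inf_{n} \alpha_{p_{n}} t}$, so there is exactly enough room to extract the required decay.
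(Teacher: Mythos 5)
Your proof is correct and follows essentially the same route as the paper: the key step in both is the pointwise bound $\psi^{(p_{n})}(t) \leq e^{\theta t} \int_{t}^{\infty} e^{-\theta s}\, \Xi({\rm d}s) \leq e^{\theta t} \hat{\Xi}(\theta)$ for $\theta = \theta_{1} < \inf_{n \geq n^{\ast}} \alpha_{p_{n}}$, obtained from the exponent comparison together with the thinning domination $\Xi^{(p_{n})} \leq \Xi$, after which $({\bf A4})$--$({\bf A5})$ give both claims. You merely spell out the details (the explicit coupling, the choice $2\theta_{1} < \theta \leq 2\inf_{n}\alpha_{p_{n}}$) that the paper leaves implicit.
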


\begin{proof}
For $t \geq 0$ and $0 < \theta < \alpha_{p_{n}}$, we notice that $\psi^{(p_{n})}(t) \leq e^{t\theta} \int_{t}^{\infty} e^{-s\theta} \Xi({\rm d} s) \leq e^{t\theta} \hat{\Xi}(\theta)$. This inequality and conditions $({\bf A4})$-$({\bf A5})$ imply our claim. 
\end{proof}

We henceforth, and for sake of simplicity, omit the superscript $(p_{n})$ from $\psi^{(p_{n})}$ and only write $\psi$ for the characteristic defined in (\ref{eq10}). 

It is well-known that the process $W^{(p_{n}), \psi}_{\varnothing}$ is a nonnegative square-integrable martingale whose terminal value will be denoted by $W^{(p_{n}), \psi}_{\varnothing}(\infty)$. Furthermore, $W^{(p_{n}), \psi}_{\varnothing}(\infty) \geq  0$ almost surely (cf. \cite[Proposition 2.4]{Ner1981} for the proof of the martingale property and \cite[Theorem 4.1 and Corollary 4.2]{JaNe} for the convergence result). In particular, if $p_{n}\equiv1$, \cite[Corollary 4.2]{JaNe} and condition $({\bf A3})$ imply that $W^{(p_{n}), \psi}(\infty) = W^{\psi}(\infty) >0$ almost surely. 

An important result established by Nerman \cite{Ner1981} and Jagers and Nerman \cite[Theorem 4.3]{JaNe} (see also Jagers \cite[Section 6.10]{Ja}) implies that for $n \geq n^{\ast}$, 
\begin{eqnarray} \label{eq36}
\lim_{t \rightarrow \infty} W^{(p_{n}), \phi}_{\varnothing}(t) = m_{\infty}^{(p_{n}), \phi} W^{(p_{n}), \psi}_{\varnothing}(\infty), 
\end{eqnarray}

\noindent almost surely and in $L_{2}(\mathbb{P})$, where
\begin{eqnarray} \label{eq39}
m_{\infty}^{(p_{n}), \phi} := \frac{\mathbb{E}[\hat{\phi}(\alpha_{p_{n}})]}{\alpha_{p} \bar{\mu}^{(p_{n})}(\alpha_{p_{n}})} < \infty,
\end{eqnarray}

\noindent with $\bar{\mu}^{(p_{n})}(\alpha_{p_{n}})$ defined in (\ref{eq35}).

\begin{remark} \label{rem2}
\noindent Notice that the previous convergence implies that $\bar{\mu}^{(p_{n})}(\alpha_{p_{n}}) >0$. Furthermore, $m_{\infty}^{(p_{n}), \phi} > 0$ (or equivalently, $\mathbb{E}[\hat{\phi}(\alpha_{p_{n}})] > 0$) whenever  $\phi \not \equiv 0$ almost surely.
\end{remark}

\begin{remark} \label{rem3}
In particular, a simple computation shows that $
m_{\infty}^{(p_{n}),\psi} =1$, where $\psi$ is defined in (\ref{eq10}); see for instance \cite[Theorem 4.1]{JaNe}.
\end{remark}

The next two results are the key ingredients in the proof of Theorem \ref{Teo2}. The first lemma shows that the $L_{2}(\mathbb{P})$ convergence of the square-integrable martingale $W^{(p_{n}), \psi}_{\varnothing}$ holds uniformly for $n \geq n^{\ast}$. Furthermore, it shows that the $L_{2}(\mathbb{P})$  convergence in (\ref{eq36}) also holds uniformly for $n \geq n^{\ast}$. The second lemma establishes an even stronger convergence result by showing that the convergence remains true as $t \rightarrow \infty$ and $p_{n} \rightarrow 1$. 

\begin{lemma} \label{lemma1}
Assume that conditions $({\bf A1})$-$({\bf A5})$ are fulfilled. We have that
\begin{eqnarray*}
\lim_{t \rightarrow \infty} \sup_{n \geq n^{\ast}} \mathbb{E} \left[ \sup_{s\geq t} \left| W^{(p_{n}), \psi}_{\varnothing}(s) - W^{(p_{n}), \psi}_{\varnothing}(\infty)  \right|^{2} \right] = 0,
\end{eqnarray*}

\noindent where $\psi$ is defined in (\ref{eq10}). Furthermore, let $\phi$ be a characteristic that does not depend on $p_{n}$ and that satisfies $({\bf A6})$-$({\bf A7})$, then we have that
\begin{eqnarray*}
\lim_{t \rightarrow \infty} \sup_{n \geq n^{\ast}} \mathbb{E} \left[  \left| W^{(p_{n}), \phi}_{\varnothing}(t) - m_{\infty}^{(p_{n}), \phi}W^{(p_{n}), \psi}_{\varnothing}(\infty) \right|^{2} \right] = 0.
\end{eqnarray*}
\end{lemma}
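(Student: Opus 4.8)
The plan is to follow the architecture of Nerman's and Jagers--Nerman's proofs of the single-parameter statements cited above, but to carry out every estimate with constants that are uniform in $n \ge n^{\ast}$. The crucial enabling facts are those arranged just before the lemma: $\alpha_{p_{n}} \to \alpha$ with $\theta_{1} < \inf_{n \ge n^{\ast}} \alpha_{p_{n}}$, $\alpha_{p_{n}} < \alpha$, and $\theta_{3} < 2\inf_{n} \alpha_{p_{n}}$. Since $\Xi^{(p_{n})}$ is a thinning of $\Xi$ (so $\Xi^{(p_{n})} \le \Xi$ under the natural coupling) and the discounting at $\alpha_{p_{n}}$ is at least as strong as at $\theta_{1}$, every $\alpha_{p_{n}}$-discounted functional of $\Xi^{(p_{n})}$ is dominated by the corresponding $\theta_{1}$-discounted functional of $\Xi$; in particular $\hat{\Xi}^{(p_{n})}(\alpha_{p_{n}}) \le \hat{\Xi}(\theta_{1})$, so $({\bf A5})$ furnishes a single square-integrable dominating bound valid for all $n$.

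For the first assertion I would reduce the uniform supremal estimate to a uniform estimate of second moments. As $W^{(p_{n}),\psi}_{\varnothing}$ is a square-integrable martingale closed by $W^{(p_{n}),\psi}_{\varnothing}(\infty)$, orthogonality of increments gives $\mathbb{E}[|W^{(p_{n}),\psi}_{\varnothing}(t) - W^{(p_{n}),\psi}_{\varnothing}(\infty)|^{2}] = \mathbb{E}[W^{(p_{n}),\psi}_{\varnothing}(\infty)^{2}] - \mathbb{E}[W^{(p_{n}),\psi}_{\varnothing}(t)^{2}]$, and Doob's $L_{2}$ maximal inequality bounds the supremum over $s \ge t$ by four times this tail. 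Writing $u_{n}(t) := \mathbb{E}[W^{(p_{n}),\psi}_{\varnothing}(t)^{2}]$ and splitting the branching recursion $W^{(p_{n}),\psi}_{\varnothing}(t) = \int_{t}^{\infty} e^{-\alpha_{p_{n}} s}\,\Xi^{(p_{n})}(\mathrm{d}s) + \sum_{i} e^{-\alpha_{p_{n}}\xi_{i}} W^{(p_{n}),\psi}_{(i)}(t-\xi_{i})$ at the first generation, one derives the renewal equation $u_{n}(t) = z_{n}(t) + \int_{0}^{t} u_{n}(t-s)\,e^{-2\alpha_{p_{n}} s}\mu^{(p_{n})}(\mathrm{d}s)$, whose kernel has total mass $\hat{\mu}^{(p_{n})}(2\alpha_{p_{n}})$; here the martingale normalisation $\hat{\mu}^{(p_{n})}(\alpha_{p_{n}}) = 1$ collapses all the cross terms and produces the $t$-independent limit $z_{n}(\infty) = \mathbb{E}[\hat{\Xi}^{(p_{n})}(\alpha_{p_{n}})^{2}] - \hat{\mu}^{(p_{n})}(2\alpha_{p_{n}})$.

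Uniformity then follows from three elementary observations. First, $\sup_{n} \hat{\mu}^{(p_{n})}(2\alpha_{p_{n}}) < 1$, since $\hat{\mu}^{(p_{n})}(2\alpha_{p_{n}}) \to \hat{\mu}(2\alpha) < 1$ by continuity, so the equation is uniformly defective and $u_{n}(\infty) = z_{n}(\infty)/(1 - \hat{\mu}^{(p_{n})}(2\alpha_{p_{n}}))$ is uniformly bounded; the domination above together with $({\bf A5})$ bounds $z_{n}(\infty)$ uniformly. Second, the tail $z_{n}(\infty) - z_{n}(t)$ is controlled by $\mathbb{E}[(\int_{t}^{\infty} e^{-\theta_{1} s}\Xi(\mathrm{d}s))^{2}]$, which is $n$-free and tends to $0$ by dominated convergence from $({\bf A5})$. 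Expanding $u_{n}(\infty) - u_{n}(t)$ as the Neumann series of the defective kernel and invoking the uniform mass gap converts this into the uniform smallness of $u_{n}(\infty) - u_{n}(t)$, giving the first statement. For the second statement I would reduce to the first: it suffices to show $\sup_{n}\mathbb{E}[|W^{(p_{n}),\phi}_{\varnothing}(t) - m_{\infty}^{(p_{n}),\phi} W^{(p_{n}),\psi}_{\varnothing}(t)|^{2}] \to 0$ and then combine with the uniform convergence $W^{(p_{n}),\psi}_{\varnothing}(t) \to W^{(p_{n}),\psi}_{\varnothing}(\infty)$ just obtained, using that $m_{\infty}^{(p_{n}),\phi}$ is uniformly bounded by $({\bf A6})$ and the continuity of $\alpha \mapsto \mathbb{E}[\hat{\phi}(\alpha)]$. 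This rests on the renewal representation underlying (\ref{eq36}), whose driving term decays because of $({\bf A6})$--$({\bf A7})$; I would bound it and its second moment uniformly by dominating the $\alpha_{p_{n}}$-discounted characteristic $\phi$ and its variance by their $\theta_{2}$- and $\theta_{3}$-discounted counterparts, which are integrable and $n$-free by assumption.

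I expect the main obstacle to be converting uniform smallness of the defective-renewal driving tails into a \emph{uniform} rate of convergence of the solutions, i.e. a uniform version of the (defective) key renewal theorem over the family of kernels $\{e^{-2\alpha_{p_{n}} s}\mu^{(p_{n})}(\mathrm{d}s)\}_{n \ge n^{\ast}}$. The saving grace is that the defect $1 - \hat{\mu}^{(p_{n})}(2\alpha_{p_{n}})$ is bounded away from $0$ uniformly, so the Neumann series converges geometrically at a rate independent of $n$ and the classical pointwise argument can be pushed through with $n$-uniform bounds; verifying the attendant direct Riemann integrability of the driving terms uniformly in $n$, via the exponential domination afforded by $\theta_{1}, \theta_{2}, \theta_{3}$, is where the care lies.
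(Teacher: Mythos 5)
Your overall architecture coincides with the paper's: reduce the supremal estimate via martingale orthogonality and Doob's $L_{2}$ inequality to uniform convergence of second moments, then control first and second moments through the renewal equations of Jagers--Nerman with $n$-uniform constants, using the coupling $\Xi^{(p_{n})}\leq\Xi$ and $\theta_{1}<\inf_{n}\alpha_{p_{n}}$ for domination. For the first assertion your argument is essentially complete and matches the paper's Proposition~\ref{Pro2}(ii) combined with the martingale identities: since $m_{t}^{(p_{n}),\psi}\equiv 1$, the driving term of the second-moment renewal equation involves only $\hat{\Xi}^{(p_{n})}$-functionals, the kernel $e^{-2\alpha_{p_{n}}s}\mu^{(p_{n})}({\rm d}s)$ is uniformly defective, and your geometric Neumann series argument is a legitimate (indeed slightly more self-contained) substitute for the paper's identity (\ref{eq15}) plus dominated convergence.

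The gap is in the second assertion. To show $\sup_{n}\operatorname{Var}\bigl(W^{(p_{n}),\phi}_{\varnothing}(t)-m_{\infty}^{(p_{n}),\phi}W^{(p_{n}),\psi}_{\varnothing}(t)\bigr)\to 0$, the driving term of the relevant variance renewal equation contains the integrals $\int_{0}^{t}m_{t-u}^{(p_{n}),\phi}\,e^{-u\alpha_{p_{n}}}\Xi^{(p_{n})}({\rm d}u)$, and one must let $m_{t-u}^{(p_{n}),\phi}$ converge to $m_{\infty}^{(p_{n}),\phi}$ \emph{uniformly in $n$} inside these integrals (this is exactly what produces the cancellation $m_{\infty}^{(p_{n}),\phi-m_{\infty}\psi}=0$). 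Uniform boundedness of $m_{t}^{(p_{n}),\phi}$, which is all your domination by $\theta_{2}$- and $\theta_{3}$-discounted quantities delivers, is not enough. The first-moment equation is driven by the kernel $e^{-\alpha_{p_{n}}s}\mu^{(p_{n})}({\rm d}s)$, which is a \emph{probability} measure, not a defective one: there is no mass gap, the Neumann series is not geometrically convergent, and your stated uniformity mechanism does not apply. The paper resolves this by invoking Tsirelson's uniform key renewal theorem for the family $\{\mu_{\alpha}^{(p)}\}$ (checking uniform direct Riemann integrability of $t\mapsto e^{-t\alpha_{p}}\mathbb{E}[\phi(t)]$ via $({\bf A6})$ and weak compactness plus uniform integrability of the kernels via $({\bf A4})$); this is Proposition~\ref{Pro1}, and it is a genuinely separate ingredient you would need to import or reprove. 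Once that is in place, your reduction of the second assertion to the first goes through (whether you run the renewal equation directly for the signed characteristic $\phi-m_{\infty}^{(p_{n}),\phi}\psi$ or use the paper's polarization identity with $\phi'=\phi+m_{\infty}^{(p_{n}),\phi}\psi$ is immaterial).
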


\begin{lemma} \label{lemma2}
Assume that conditions $({\bf A1})$-$({\bf A5})$ are fulfilled. For a characteristic $\phi$ that does not depend on $p_{n}$ and that satisfies $({\bf A6})$-$({\bf A7})$, we have that
\begin{eqnarray*}
\lim_{n \rightarrow \infty, t \rightarrow \infty} \mathbb{E} \left[ \left| W_{\varnothing}^{(p_{n}), \phi}(t) -  m_{\infty}^{\phi} W^{\psi}(\infty) \right |^{2}  \right] = 0,
\end{eqnarray*}

\noindent where $\psi$ is defined in (\ref{eq10}) with $p_{n} \equiv 1$ (the limit must be understood as a double limit). 
\end{lemma}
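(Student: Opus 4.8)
The plan is to deduce Lemma \ref{lemma2} from Lemma \ref{lemma1} together with the convergence of the Malthusian parameters and limit quantities as $p_{n} \to 1$. Since Lemma \ref{lemma1} gives a uniform (over $n \geq n^{\ast}$) control of the $L_{2}(\mathbb{P})$ distance between $W_{\varnothing}^{(p_{n}), \phi}(t)$ and its random limit $m_{\infty}^{(p_{n}), \phi} W_{\varnothing}^{(p_{n}), \psi}(\infty)$, the remaining task is essentially a triangle-inequality argument: I would write
\begin{eqnarray*}
\left\| W_{\varnothing}^{(p_{n}), \phi}(t) - m_{\infty}^{\phi} W^{\psi}(\infty) \right\|_{2} &\leq& \left\| W_{\varnothing}^{(p_{n}), \phi}(t) - m_{\infty}^{(p_{n}), \phi} W_{\varnothing}^{(p_{n}), \psi}(\infty) \right\|_{2} \\
&& {}+ \left\| m_{\infty}^{(p_{n}), \phi} W_{\varnothing}^{(p_{n}), \psi}(\infty) - m_{\infty}^{\phi} W^{\psi}(\infty) \right\|_{2},
\end{eqnarray*}
where $\|\cdot\|_{2}$ denotes the $L_{2}(\mathbb{P})$ norm. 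The first term tends to $0$ uniformly in $n$ as $t \to \infty$ by the second assertion of Lemma \ref{lemma1}, so it vanishes in the double limit. The whole difficulty is therefore concentrated in the second term, which no longer involves $t$ and must be shown to vanish as $n \to \infty$ (i.e.\ as $p_{n} \to 1$).

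For the second term I would first establish that the deterministic multipliers converge, namely $m_{\infty}^{(p_{n}), \phi} \to m_{\infty}^{\phi}$ as $n \to \infty$. This follows from the explicit formula (\ref{eq39}): since $\alpha_{p_{n}} \to \alpha$, continuity of $\theta \mapsto \mathbb{E}[\hat{\phi}(\theta)]$ and of $\theta \mapsto \bar{\mu}^{(p_{n})}(\theta)$ in the relevant parameters (guaranteed by the integrability built into ({\bf A4})--({\bf A7}) and dominated convergence, using the uniform exponential bounds from the choice of $n^{\ast}$) gives $\mathbb{E}[\hat{\phi}(\alpha_{p_{n}})] \to \mathbb{E}[\hat{\phi}(\alpha)]$ and $\alpha_{p_{n}} \bar{\mu}^{(p_{n})}(\alpha_{p_{n}}) \to \alpha \bar{\mu}(\alpha)$. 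Having the numerical convergence $m_{\infty}^{(p_{n}), \phi} \to m_{\infty}^{\phi}$, I would then split the second term once more,
\begin{eqnarray*}
\left\| m_{\infty}^{(p_{n}), \phi} W_{\varnothing}^{(p_{n}), \psi}(\infty) - m_{\infty}^{\phi} W^{\psi}(\infty) \right\|_{2} \leq \left| m_{\infty}^{(p_{n}), \phi} - m_{\infty}^{\phi} \right| \, \big\| W_{\varnothing}^{(p_{n}), \psi}(\infty) \big\|_{2} + m_{\infty}^{\phi} \left\| W_{\varnothing}^{(p_{n}), \psi}(\infty) - W^{\psi}(\infty) \right\|_{2},
\end{eqnarray*}
where $W^{\psi}(\infty)$ is the terminal martingale value at $p_{n} \equiv 1$. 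The first summand vanishes once I check that $\sup_{n \geq n^{\ast}} \| W_{\varnothing}^{(p_{n}), \psi}(\infty) \|_{2} < \infty$, which is immediate from the uniform $L_{2}$ bound in the first assertion of Lemma \ref{lemma1} (the martingales are uniformly bounded in $L_{2}$, and their limits inherit the same bound).

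The main obstacle is the last summand: showing that the terminal martingale values converge in $L_{2}$, $W_{\varnothing}^{(p_{n}), \psi}(\infty) \to W^{\psi}(\infty)$ as $n \to \infty$. The subtlety is that the martingale $W_{\varnothing}^{(p_{n}), \psi}$ itself depends on $p_{n}$ through both the thinned birth process $\Xi^{(p_{n})}$ and the Malthusian parameter $\alpha_{p_{n}}$ sitting inside the characteristic $\psi = \psi^{(p_{n})}$ of (\ref{eq10}), so one cannot simply interchange limits naively. I would handle this by realizing all the clonal processes on a common probability space through a coupling of the thinning (each child of the full CMJ-tree carries an independent uniform mark, and the $p_{n}$-clonal process keeps those with mark below $p_{n}$), so that as $p_{n} \to 1$ the clonal tree increases to the full tree. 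On this coupling, I would use the uniform $L_{2}$ convergence of Lemma \ref{lemma1} at a large but fixed time $t$ to approximate each $W_{\varnothing}^{(p_{n}), \psi}(\infty)$ by $W_{\varnothing}^{(p_{n}), \psi}(t)$ uniformly in $n$, and then argue that for fixed $t$ the finitely-many-term expression $W_{\varnothing}^{(p_{n}), \psi}(t)$ converges (pointwise and in $L_{2}$, by dominated convergence against the uniform bound) to $W^{\psi}(t)$ as $p_{n} \to 1$, since only finitely many individuals are born by time $t$ and both the marks stabilizing and $\alpha_{p_{n}} \to \alpha$ make each summand converge. Combining the uniform-in-$n$ tail estimate with this fixed-$t$ convergence via a standard $\varepsilon/3$ argument then yields the $L_{2}$ convergence of the terminal values, completing the proof. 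The care needed in the coupling and in justifying the interchange of the $t \to \infty$ and $p_{n} \to 1$ limits is where the real work lies.
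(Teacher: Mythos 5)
Your proposal is correct in outline, but it reorganizes the argument around a different decomposition than the paper's. The paper first proves the fixed-$t$ convergence $\lim_{p\to 1}\|W^{(p),\phi}_{\varnothing}(t)-W^{\phi}(t)\|_{2}=0$ using the first mutant birth time $b^{(p)}_{1}\to\infty$ (on $\{t<b^{(p)}_{1}\}$ the clonal and full $\phi$-counted processes coincide exactly), then combines this with the uniform estimate of Lemma \ref{lemma1} to show that $W^{(p),\phi}_{\varnothing}(t)$ is Cauchy in $L_{2}(\mathbb{P})$ in the double limit, and finally identifies the limit by computing the iterated limit $\lim_{t\to\infty}\lim_{p\to 1}$, which equals $\lim_{t\to\infty}W^{\phi}(t)=m_{\infty}^{\phi}W^{\psi}(\infty)$. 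The payoff of that route is that the coupling step only ever involves the $p$-independent characteristic $\phi$, and one never has to prove that the terminal martingale values $W^{(p_{n}),\psi}_{\varnothing}(\infty)$ converge. Your route instead targets the limit directly via the triangle inequality, and so concentrates all the difficulty in the $L_{2}$ convergence $W^{(p_{n}),\psi}_{\varnothing}(\infty)\to W^{\psi}(\infty)$, where the characteristic $\psi^{(p_{n})}$ of (\ref{eq10}) itself depends on $p_{n}$ through both $\alpha_{p_{n}}$ and the thinned birth process $\Xi^{(p_{n})}$. You correctly isolate this as the real work and your sketch goes through: the uniform tail control comes from the first assertion of Lemma \ref{lemma1}, and at fixed $t$ the a.s.\ convergence of each summand (marks stabilizing, $\alpha_{p_{n}}\to\alpha$, monotone convergence of the thinned point processes) can be upgraded to $L_{2}$ convergence because $\psi^{(p_{n})}(s)\le e^{s\theta_{1}}\hat{\Xi}(\theta_{1})$ gives the domination $W^{(p_{n}),\psi}_{\varnothing}(t)\le\sum_{u:\sigma_{u}\le t}\hat{\Xi}_{u}(\theta_{1})$, a random variable with finite second moment by $({\bf A5})$ and Theorem \ref{Teo1}; this domination step should be made explicit, as it is the one place your argument needs an input beyond what the paper's own proof uses. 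The auxiliary facts you invoke ($m_{\infty}^{(p_{n}),\phi}\to m_{\infty}^{\phi}$ by dominated convergence in (\ref{eq39}), and the uniform $L_{2}$ bound on $W^{(p_{n}),\psi}_{\varnothing}(\infty)$ via Propositions \ref{Pro1} and \ref{Pro2}) are all available. In short: same essential ingredients (Lemma \ref{lemma1} plus a thinning coupling plus an $\varepsilon/3$ argument), but the paper's Cauchy-and-iterated-limit device is the more economical packaging, while yours is more explicit about the limit at the cost of the extra convergence of the $\psi$-martingale terminal values.
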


The proofs of these lemmas are rather technical and it is convenient to postpone their proofs until the Appendix \ref{prel}. We then finish the proof of Theorem \ref{Teo2}. But first we need the next result. 

\begin{lemma} \label{lemma3}
Assume that conditions ({\bf A1})-({\bf A4}) are fulfilled. We have that $\alpha - \alpha_{p_{n}} \sim  (1-p_{n}) \bar{\mu}(\alpha)^{-1}$, as $n \rightarrow \infty$, where $\bar{\mu}(\alpha)$ is defined in (\ref{eq27}). 
\end{lemma}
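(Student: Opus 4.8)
The plan is to read off an exact relation between the two Malthusian parameters from the thinning identity and then convert it into the claimed asymptotic via a first-order expansion of $\hat{\mu}$ around $\alpha$. First I would record the defining equations. By $({\bf A4})$ the parameter $\alpha$ satisfies $\hat{\mu}(\alpha)=1$, and since $\mu^{(p_n)}({\rm d}t)=p_n\,\mu({\rm d}t)$ we have $\hat{\mu}^{(p_n)}(\theta)=p_n\hat{\mu}(\theta)$, so that $\alpha_{p_n}$ is characterized by $p_n\hat{\mu}(\alpha_{p_n})=1$, i.e.
\begin{eqnarray*}
\hat{\mu}(\alpha_{p_n})=\frac{1}{p_n}.
\end{eqnarray*}
Because $p_n\to 1$ as $n\to\infty$, this gives $\hat{\mu}(\alpha_{p_n})\to 1=\hat{\mu}(\alpha)$; since $\hat{\mu}$ is strictly decreasing on $(\theta_1,\infty)$ and $\hat{\mu}(\alpha)=1<\hat{\mu}(\theta_1)$ forces $\alpha>\theta_1$, the strict monotonicity and continuity of $\hat{\mu}$ there yield $\alpha_{p_n}\to\alpha$.

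Next I would differentiate. The Laplace transform $\hat{\mu}$ is smooth on the open interval $(\theta_1,\infty)$ (the interior of its domain of convergence, since $\hat{\mu}(\theta_1)<\infty$), and differentiation under the integral sign gives
\begin{eqnarray*}
\hat{\mu}'(\theta)=-\int_0^\infty t\,e^{-\theta t}\mu({\rm d}t)=-\bar{\mu}(\theta),\qquad \theta>\theta_1 .
\end{eqnarray*}
In particular $\bar{\mu}$ is continuous at $\alpha$, with $\bar{\mu}(\alpha)\in(0,\infty)$ by Remarks \ref{rem4} and \ref{rem2}. Applying the mean value theorem to $\hat{\mu}$ between $\alpha_{p_n}$ and $\alpha$, there is $\xi_n$ lying between them with
\begin{eqnarray*}
\frac{1}{p_n}-1=\hat{\mu}(\alpha_{p_n})-\hat{\mu}(\alpha)=-\bar{\mu}(\xi_n)\,(\alpha_{p_n}-\alpha)=\bar{\mu}(\xi_n)\,(\alpha-\alpha_{p_n}),
\end{eqnarray*}
so that
\begin{eqnarray*}
\alpha-\alpha_{p_n}=\frac{1-p_n}{p_n\,\bar{\mu}(\xi_n)} .
\end{eqnarray*}

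Finally I would pass to the limit. Since $\alpha_{p_n}\to\alpha$ and $\xi_n$ is squeezed between $\alpha_{p_n}$ and $\alpha$, we get $\xi_n\to\alpha$, whence $\bar{\mu}(\xi_n)\to\bar{\mu}(\alpha)$ by continuity; together with $p_n\to 1$ this gives
\begin{eqnarray*}
\alpha-\alpha_{p_n}\sim\frac{1-p_n}{\bar{\mu}(\alpha)}=(1-p_n)\,\bar{\mu}(\alpha)^{-1},\qquad n\to\infty,
\end{eqnarray*}
as claimed. The only genuinely nontrivial point, and the step I would present with care, is the justification that $\bar{\mu}=-\hat{\mu}'$ is finite and continuous in a neighborhood of $\alpha$: this rests on the observation that $\alpha>\theta_1$, placing $\alpha$ strictly inside the region where the Laplace transform is analytic, so that differentiation under the integral is legitimate. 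The convergence $\alpha_{p_n}\to\alpha$ and the mean value argument are then routine.
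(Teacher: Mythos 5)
Your proposal is correct and follows essentially the same route as the paper's proof: the identity $\hat{\mu}(\alpha_{p_{n}})=1/p_{n}$, the mean value theorem applied to $\hat{\mu}$ between $\alpha_{p_{n}}$ and $\alpha$, and the continuity of $\hat{\mu}'=-\bar{\mu}$ on $(\theta_{1},\infty)$ obtained by differentiation under the integral. The only cosmetic difference is that you make explicit the preliminary step $\alpha_{p_{n}}\rightarrow\alpha$, which the paper records earlier in Section 2.
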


\begin{proof}
Recall the definition of $\hat{\mu}(\cdot)$ in (\ref{eq25}). Recall also that ({\bf A4}) implies that $\hat{\mu}(\cdot)$ is a continuous monotone decreasing function on $(\theta_{1}, \infty)$, where $\theta_{1}$ is defined in  $({\bf A4})$. Furthermore, ({\bf A4}) and the dominated convergence theorem show that $\hat{\mu}(\cdot)$ is differentiable with continuous derivative given by 
\begin{eqnarray} \label{eq38}
\hat{\mu}^{\prime}(\theta) := \frac{{\rm d} }{{\rm d} s} \hat{\mu}(s) \bigg |_{s = \theta} =  - \int_{0}^{\infty} t e^{-\theta t} \mu({\rm d} t), \hspace*{5mm} \text{for} \hspace*{3mm} \theta \in (\theta_{1}, \infty).
\end{eqnarray}

\noindent Since $p_{n} \in (0,1)$, we have that $\alpha_{p_{n}} < \alpha$ for $n \geq n^{\ast}$. Then,  for $n \geq n^{\ast}$, the mean value theorem implies that there exists $\varepsilon_{n} \in (\alpha_{p_{n}}, \alpha)$ such that
\begin{eqnarray*}
\hat{\mu}^{\prime}(\varepsilon_{n}) = \frac{\hat{\mu}(\alpha) - \hat{\mu}(\alpha_{p_{n}})}{\alpha - \alpha_{p_{n}}}.
\end{eqnarray*}

\noindent Recall that ({\bf A4}) implies that $\hat{\mu}(\alpha) = 1$ and $\hat{\mu}(\alpha_{p_{n}}) = 1/p_{n}$. Moreover, we have that $0 < \bar{\mu}(\alpha) < - \hat{\mu}^{\prime}(\varepsilon_{n})  < \infty$ where $\bar{\mu}(\alpha)$ is defined in (\ref{eq27}). Hence
\begin{eqnarray*}
\alpha - \alpha_{p_{n}} = - \hat{\mu}^{\prime}(\varepsilon_{n})^{-1} \frac{1-p_{n}}{p_{n}}.
\end{eqnarray*}

\noindent Finally, one concludes from the continuity of $\hat{\mu}^{\prime}(\cdot)$ since $\alpha_{p_{n}} \rightarrow \alpha$, as $ n \rightarrow \infty$, and $ - \hat{\mu}^{\prime}(\alpha) = \bar{\mu}(\alpha)$.
\end{proof}

\begin{proof}[Proof of Theorem \ref{Teo2}] 
We deduce from Lemmas \ref{lemma1} and \ref{lemma2} that
\begin{eqnarray*} 
\lim_{n \rightarrow \infty} e^{-\alpha \tau_{n}^{\phi}} Z(\tau_{n}^{\phi}) = \lim_{n \rightarrow \infty} e^{-\alpha_{p_{n}} \tau_{n}^{\phi}} Z_{\varnothing}^{(p_{n})}(\tau_{n}^{\phi}) = m_{\infty} W^{\psi}(\infty),
\end{eqnarray*}

\noindent in probability, where $\psi$ is defined in (\ref{eq10}) with $p_{n} \equiv1$ and $m_{\infty} = (\alpha \bar{\mu}(\alpha))^{-1} \in (0, \infty)$; see Remark \ref{rem2}. Proposition \ref{Pro3} implies that $\lim_{n \rightarrow \infty} n^{-1} Z(\tau_{n}^{\phi}) = \mathbb{E}[\hat{\phi}(\alpha)]$ a.s., then
\begin{eqnarray*}
\lim_{n \rightarrow \infty} n^{\frac{\alpha_{p_{n}}}{\alpha}} e^{-\alpha_{p_{n}} \tau_{n}^{\phi}} =  m_{\infty} \mathbb{E}^{-1}[\hat{\phi}(\alpha)] W^{\psi}(\infty).
\end{eqnarray*}

\noindent Since $W^{\psi}(\infty) > 0$ almost surely (by $({\bf A3})$ together with \cite[Corollary 4.2]{JaNe}), we obtain that
\begin{eqnarray*}
\lim_{n \rightarrow \infty} n^{-\frac{\alpha_{p_{n}}}{\alpha}} Z_{\varnothing}^{(p_{n})}(\tau_{n}^{\phi})= \mathbb{E}[\hat{\phi}(\alpha)],
\end{eqnarray*}

\noindent in probability. Therefore, our claim follows by Lemma \ref{lemma3}.
\end{proof}

\section{Applications} \label{applications}

In this section, we apply Theorem \ref{Teo2} to deduce known and new results on the existence of a giant percolation cluster in the supercritical regime for several families of trees. We begin in Subsection \ref{general} with some examples where the result has been established in \cite{Be2013}. In Subsections \ref{Ex2}, \ref{Median}, \ref{fragmentation} and \ref{Homo}, we present several new examples.

\subsection{General preferential attachment trees} \label{general}

We introduce the procedure studied by Rudas, T\'oth and Valk\'o \cite{Ru2007} and Rudas and T\'oth \cite{Ru2009} to grow a so-called general preferential attachment tree. Fix a sequence of nonnegative weights ${\bf w} = (w_{k})_{k=0}^{\infty}$ with $w_{0} >0$. Start the construction from an unique tree with a single vertex and build a random tree $T_{n}^{({\bf w})}$ with $n$ vertices recursively as follows. Suppose that $T_{n}^{({\bf w})}$ has been constructed for $n \geq 1$, and for every vertex $v \in T_{n}^{({\bf w})}$ denote by $d_{n}^{+}(v)$ its outdegree. Given $T_{n}^{({\bf w})}$, the tree $T_{n+1}^{({\bf w})}$ is derived from $T_{n}^{({\bf w})}$ by incorporating a new vertex $u$ and creating an edge between $u$ and a vertex $v_{n} \in T_{n}^{({\bf w})}$ chosen at random according to the law
\begin{eqnarray*}
\mathbb{P}(v_{n} = v | T_{n}^{({\bf w})}) = w_{d_{n}^{+}(v)} \bigg( \sum_{v^{\prime}} w_{d_{n}^{+}(v^{\prime})} \mathds{1}_{\{v^{\prime} \in T_{n}^{({\bf w})} \}}\bigg)^{-1}, \hspace*{5mm} \text{for} \hspace*{3mm} v \in T_{n}^{({\bf w})}.
\end{eqnarray*}

\noindent It is important to point out that different choices of the sequence $\mathbf{w}$ yield to well-known families of trees. Some of these families are summarized in Table \ref{table1}; see \cite{Szy1987}, \cite{Aldous1991}, \cite{Mah1994}, and \cite{Boris1994} for background.  

\begin{table}[h!]
\centering
\caption{Examples of general preferential attachment trees.}
\label{table1}
\begin{tabular}{|c|c|}
\hline
 & ${\bf w} = (w_{k})_{k=0}^{\infty}$  \\ \hline
random recursive tree & $w_{k} = 1$ for all $k \geq 0$ \\ \hline
binary search tree  & $w_{0} = 2$, $w_{1} =1$ and $w_{k} = 0$ for $k \geq 2$ \\ \hline
$m$-ary increasing tree ($m \geq 2$) & $w_{k} = m -k$, for $k =0,1,\dots, m-1$, and $w_{k} = 0$ for $k \geq m$  \\ \hline
linear preferential attachment &  $w_{k} = \beta k + \rho$, where $\beta \in \{-1, 0, 1 \}$ and $\rho \in \mathbb{R}_{+} \setminus \{0 \}$ \\ \hline
binary pyramid &  $w_{0} = w_{1} = 1$ and $w_{k} = 0$ for $k \geq 2$. \\ \hline
\end{tabular}
\end{table}

Preferential attachment trees can be constructed via CMJ-processes as in Definition \ref{def1}. More precisely, consider the characteristic (or weight) $\phi \equiv 1$ and a CMJ-process with birth times $\xi_{i} = \sum_{k=1}^{i} X_{k}$ for $i \in \mathbb{N} \cup \{0\}$ (with the convention $\xi_{0} = \sum_{k=1}^{0} X_{k} =0$), where $X_{i} = \xi_{i} - \xi_{i-1}$, for $i \in \mathbb{N}$, are independent random variables and distributed according as an exponential random variable of parameter $w_{i-1}$.
In other words, we have that the process $(\Xi([0,t]), t\geq 0)$ is a pure birth process starting at $0$ with birth rate $w_{k}$ when the state is $k \in \mathbb{N} \cup \{0\}$. In this example, the lifetime of the individuals $\lambda \equiv \infty$. In the sequel, we assume that the pure birth process $(\Xi([0,t]), t\geq 0)$ is non-explosive, i.e.,
\begin{eqnarray} \label{eq20}
\sum_{k=0}^{\infty} \frac{1}{w_{k}} = \infty;
\end{eqnarray}

\noindent see \cite{Athreya2007} for details. This implies that each individual in the CMJ-process has a.s.\ a finite number of children in each finite interval. Furthermore, notice that $\hat{\Xi}(\theta) = \sum_{k=1}^{\infty} e^{-\theta \xi_{k}}$ for $\theta >0$. Then,
\begin{eqnarray*}
\hat{\mu}(\theta) = \sum_{k=1}^{\infty} \mathbb{E}[e^{-\theta \xi_{k}}] = \sum_{k=1}^{\infty} \prod_{i=1}^{k} \mathbb{E}[e^{-\theta X_{i}}] = \sum_{k=1}^{\infty} \prod_{i=0}^{k-1} \frac{1}{1+\theta/w_{i}}.
\end{eqnarray*}

\noindent Assume that 
\begin{equation} \label{cond1}
\text{there exists} \hspace*{4mm} \varepsilon_{1} >0 \hspace*{4mm} \text{such that} \hspace*{4mm} 1 < \hat{\mu}(\varepsilon_{1}) < \infty. \tag{{\bf E1}}
\end{equation}

\noindent This implies that $w_{1} > 0$ and that the condition of non-explosion (\ref{eq20}) is fulfilled. At the same time, it should be plain that condition $({\bf A4})$ is verified. Thus, the {\sl Malthusian parameter} exists, i.e., there is $\alpha > \varepsilon_{1}$ such that $\hat{\mu}(\alpha) =1$. Finally, we further assume that
\begin{equation} \label{cond2}
 Var ( \hat{\Xi}(\varepsilon_{1})) < \infty \tag{{\bf E2}}.
\end{equation}

\noindent Hence the conditions $({\bf A1})$–$({\bf A7})$ are satisfied and Theorem \ref{Teo2} implies the following result.

\begin{corollary}  \label{cor1}
In the supercritial regime, under $({\bf E1})$-$({\bf E2})$, we have that $\lim_{n \rightarrow \infty} n^{-1} |T_{n}^{({\bf w})}| = e^{-\frac{c}{\alpha \bar{\mu}(\alpha)}}$ in probability.
\end{corollary}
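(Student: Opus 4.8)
The plan is to show that the general preferential attachment tree $T_{n}^{(\mathbf{w})}$ is exactly (in distribution) a CMJ-tree $T_{n}^{\phi}$ for the explicit choice of birth process and characteristic described in the paragraph preceding Corollary \ref{cor1}, and then simply invoke Theorem \ref{Teo2}(ii). The corollary is therefore not a fresh probabilistic estimate but a \emph{verification} that the stated CMJ-process falls within the scope of the main theorem. Concretely, I would proceed in three steps: first, identify the right CMJ-process; second, check that the hypotheses $(\mathbf{A1})$--$(\mathbf{A7})$ reduce, for this process and this $\phi\equiv 1$, to the two stated conditions $(\mathbf{E1})$--$(\mathbf{E2})$; third, read off the conclusion from Theorem \ref{Teo2}(ii).

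For the first step, the key observation is that a pure birth process $(\Xi([0,t]),t\geq 0)$ with rate $w_{k}$ in state $k$ produces, at each jump, a new child of the current vertex. Running an independent copy of this process at every vertex simultaneously, the rate at which vertex $v$ of current outdegree $d^{+}(v)$ acquires its next child is $w_{d^{+}(v)}$, so by the competing-exponentials (memorylessness) property the \emph{next} new vertex attaches to $v$ with probability proportional to $w_{d^{+}(v)}$, precisely matching the preferential attachment law $\mathbb{P}(v_{n}=v\mid T_{n}^{(\mathbf{w})})$. Since $\phi\equiv 1$ counts vertices, $\tau^{\phi}(n)$ is the moment the tree reaches $n$ vertices and $T_{n}^{\phi}$ has law $T_{n}^{(\mathbf{w})}$; because birth times are a.s.\ distinct, $|T_{n}^{\phi}|=n$. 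I would state this embedding carefully, citing that it is standard (this is the Rudas--T\'oth--Valk\'o construction).

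The second step is the verification of the assumptions, which the excerpt has already largely arranged so that everything collapses onto $(\mathbf{E1})$ and $(\mathbf{E2})$. Condition $(\mathbf{A1})$ holds because $\mu(\{0\})=0$ (the first interarrival $X_{1}$ is exponential, hence has no atom at $0$); $(\mathbf{A2})$ holds since $\mu$ has a density and is non-lattice; $(\mathbf{A3})$ holds because, given non-explosion, the pure birth process has $N=\infty$ a.s., so every vertex eventually has a child and the process never dies out. Condition $(\mathbf{A4})$ is exactly the content of $(\mathbf{E1})$: the displayed series $\hat{\mu}(\theta)=\sum_{k\geq 1}\prod_{i=0}^{k-1}(1+\theta/w_{i})^{-1}$ satisfies $1<\hat{\mu}(\varepsilon_{1})<\infty$, which yields the Malthusian $\alpha>\varepsilon_{1}$ with $\hat{\mu}(\alpha)=1$; one should also note, as the excerpt does, that $(\mathbf{E1})$ forces $w_{1}>0$ and the non-explosion condition $\sum_{k}1/w_{k}=\infty$. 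Condition $(\mathbf{A5})$ with $\theta_{1}=\varepsilon_{1}$ is exactly $(\mathbf{E2})$. Finally $(\mathbf{A6})$ and $(\mathbf{A7})$ are trivial for $\phi\equiv 1$, since a bounded deterministic characteristic has $\sup_{t}e^{-\theta_{2}t}\phi(t)\leq 1$ integrable for any $\theta_{2}\in(0,\alpha)$, and $\mathrm{Var}(\phi(t))\equiv 0$.

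With all assumptions in force and $\phi\equiv 1$, I would conclude by computing $\mathbb{E}[\hat{\phi}(\alpha)]=\hat{1}(\alpha)=\alpha\int_{0}^{\infty}e^{-\alpha t}\,\mathrm{d}t=1$ and applying Theorem \ref{Teo2}(ii), which gives directly $\lim_{n\to\infty}n^{-1}|T_{n}^{(\mathbf{w})}|=e^{-c/(\alpha\bar{\mu}(\alpha))}\,\mathbb{E}[\hat{\phi}(\alpha)]=e^{-c/(\alpha\bar{\mu}(\alpha))}$ in probability. I do not anticipate a genuine analytic obstacle here; the only point requiring care is the embedding itself, namely a clean justification that the discrete preferential attachment dynamics coincide in law with the simultaneous pure birth processes, and that the induced intensity measure is the stated $\mu$ with $\hat{\mu}$ as displayed. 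That verification of the exponential competing-clock argument, rather than any estimate, is the crux of the proof.
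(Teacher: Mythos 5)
Your proposal is correct and follows the same route as the paper: the paper's ``proof'' of Corollary \ref{cor1} is precisely the preceding discussion (the Rudas--T\'oth--Valk\'o embedding of $T_{n}^{(\mathbf{w})}$ as a CMJ-tree with pure-birth reproduction, the identification of $(\mathbf{E1})$--$(\mathbf{E2})$ with $(\mathbf{A4})$--$(\mathbf{A5})$, the triviality of $(\mathbf{A6})$--$(\mathbf{A7})$ for $\phi\equiv 1$, and $\mathbb{E}[\hat{\phi}(\alpha)]=1$) followed by an appeal to Theorem \ref{Teo2}(ii). One small slip: your justification of $(\mathbf{A3})$ via ``$N=\infty$ a.s.'' is false whenever some $w_{k}=0$ (e.g.\ binary search trees), but the needed conclusion $N\geq 1$ a.s.\ still holds because $w_{0}>0$ makes the first birth time a.s.\ finite.
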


Finally, Corollary \ref{cor1} allows us to recover some of the cases studied in \cite{Be2013}; see \cite[Section 6]{Holm2017} for details of the calculations.

\begin{table}[h!]
\centering
\label{table2}
\begin{tabular}{|c|c|c|c|}
\hline
 & $\hat{\mu}(\theta)$  & $\alpha$ & $\bar{\mu}(\alpha)$  \\ \hline
random recursive tree  & $\frac{1}{\theta}$, $\theta >0$ & $1$ & $1$   \\ \hline
binary search tree &  $\frac{2}{\theta+1}$, $\theta >-1$ & $1$ & $\frac{1}{2}$   \\ \hline
$m$-ary increasing tree ($m \geq 2$) & $\frac{m}{\theta+1}$, $\theta >-1$ &  $m-1$ & $\frac{1}{m}$ \\ \hline
linear preferential attachment & $\frac{\rho}{\theta - \beta}$, $\theta > \beta$ & $\beta + \rho$ & $\frac{1}{\rho}$ \\ \hline
binary pyramid & $\frac{1}{1+\theta} + \frac{1}{(1+\theta)^{2}}$, $\theta > -1$ & $\frac{\sqrt{5}-1}{2}$ & $\frac{4 \sqrt{5}}{(1+\sqrt{5})^{2}}$ \\ \hline
\end{tabular}
\end{table}

\subsection{The $m$-ary search tree} \label{Ex2}

The $m$-ary search trees,  where $m \geq 2$ is a fixed integer,  were first introduced in \cite{Mun1971}. In particular, $m=2$ corresponds to the binary search tree described in Section \ref{general}, Table \ref{table1}. An $m$-ary search tree is an $m$-ary tree constructed recursively from a sequence of keys (real numbers), where each vertex stores up to $m-1$ keys. More precisely, one starts from a tree containing just an empty vertex (the root). Assume that the keys are i.i.d.\ random variables with a continuous distribution on $\mathbb{R}$. Then add keys one by one until the $(m-1)$-th key is placed in the root (i.e., the root becomes full) and add $m$ new empty vertices as children of the root. Furthermore, the $m-1$ keys in the root divide the set of real numbers into $m$ intervals $I_{1}, \dots, I_{m}$ that one associates with each of the $m$ children of the root. Then each further key is passed to one of the children of the root depending on which interval it belongs, i.e., a key in $I_{i}$ is stored in the $i$-th child. Finally, one continues by iterating this procedure in an obvious way everytime a vertex becomes full. 

This construction yields the extended $m$-ary search tree. In this setting, the vertices containing at least one key are called internal and the empty vertices are called external. In this work, we decide to eliminate the external vertices and consider the tree consisting of the internal nodes only. This is the $m$-ary search tree (in any case, our results also apply to extended $m$-ary search tree). We also consider $m$-ary search trees with a fixed number of keys, say $n \in \mathbb{N}$. In other words, one stops the previous procedure at time when the $n$-th key is added and denotes by $T_{n}^{(m)}$ the resulting (random) $m$-ary search tree with $n$ keys. Notice that the number of vertices of $T_{n}^{(m)}$  is actually random. 

Following \cite[Section 7.2]{Holm2017}, one can construct $T_{n}^{(m)}$ as the family tree of a CMJ-process. Consider a continuous time version of the construction procedure of an $m$-ary search tree and start with one vertex (the root) with a single key. The root acquires more keys after successive independent waiting times $Y_{2}, \dots, Y_{m-1}$, where $Y_{i}$ is an exponential random variable of parameter $i \in \{2, \dots, m\}$. At the arrival of the $(m-1)$-th key, at time $\sum_{i=2}^{m-1} Y_{i}$ (with the convention that the sum is equal to $0$ when $m=2$), the root gets $m$ children with one key each of them, marked by $1, \dots, m$, with the child $k$ born after a further waiting time $X_{k}$, i.e.\ at time $\sum_{i=2}^{m-1} Y_{i} + X_{k}$, where $X_{1}, \dots, X_{m}$ are independent and exponentially distributed random variables of parameter $1$. Finally, one continues growing the tree in an obvious way. Clearly, the CMJ-process associated with an $m$-ary search tree possesses birth times $\xi_{k} = \sum_{i=2}^{m-1} Y_{i} + X_{k}$, for $k =1, \dots, m$, (note that $N=m$ is non-random) and life time of each individual $\lambda \equiv \infty$.  By considering the characteristic
\begin{eqnarray*}
\phi_{m}(t) = k, \hspace*{5mm} \text{for} \hspace*{5mm} \sum_{i=2}^{k}Y_{i} \leq t  < \sum_{i=2}^{k+1} Y_{i}, \hspace*{5mm} \text{and} \hspace*{3mm} k =1, 2, \dots, m-1, \hspace*{2mm} t\geq 0,
\end{eqnarray*}

\noindent one sees that $T^{\phi_{m}}_{n}$, in Definition \ref{def1}, is a random $m$-ary search tree with $n$ keys. Notice that $\hat{\Xi}(\theta) = \sum_{k=1}^{m} \exp(-\theta ( \sum_{i=2}^{m-1} Y_{i} + X_{k} ))$. Hence a simple computation implies that 
\begin{eqnarray} \label{eq17}
\hat{\mu}(\theta) = \sum_{k=1}^{m} \mathbb{E} \left[\exp \left(-\theta \left( \sum_{i=2}^{m-1} Y_{i} + X_{k} \right) \right) \right] =  m ! \prod_{i=1}^{m-1} (i+\theta)^{-1}, \hspace*{5mm} \theta > -1.  
\end{eqnarray}
 
\noindent In particular, we see that the {\sl Malthusian parameter} is $\alpha =1$. Furthermore,
\begin{eqnarray*}
\mathbb{E} \left[\hat{\Xi}(\theta)^{2} \right] = \hat{\mu}(2\theta) \left(  1 + \frac{(m-1)(m-2)}{m} \frac{1+2\theta}{(1+\theta)^{2}}\right), \hspace*{5mm} \theta > -1/2,
\end{eqnarray*}

\noindent which implies that $Var(\hat{\Xi}(\theta)) < \infty$ for $\theta > -1/2$. Thus, it should be clear that conditions ({\bf A1})–({\bf A7}) are satisfied. Consequently, Theorem \ref{Teo2} implies the following result. 

\begin{corollary}
In the supercritical regime, we have that $\lim_{n \rightarrow \infty} n^{-1} |T^{(m)}_{n}| = 2(H_{m}-1) e^{-\frac{c}{H_{m}-1}}$ in probability, where $H_{m} = \sum_{i=1}^{m} i^{-1}$.
\end{corollary}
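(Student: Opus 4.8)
The plan is to read off the three ingredients that feed Theorem \ref{Teo2}(ii) --- the Malthusian parameter $\alpha$, the quantity $\bar{\mu}(\alpha)$, and the mean $\mathbb{E}[\hat{\phi}_{m}(\alpha)]$ --- and then substitute them into $\lim_{n\to\infty} n^{-1}|T_{n}^{(m)}| = e^{-c/(\alpha\bar{\mu}(\alpha))}\mathbb{E}[\hat{\phi}_{m}(\alpha)]$. Since it has already been verified that $({\bf A1})$--$({\bf A7})$ hold and that $\hat{\mu}(\theta) = m!\prod_{i=1}^{m-1}(i+\theta)^{-1}$ for $\theta>-1$ with $\alpha=1$, the whole argument reduces to two explicit computations plus an invocation of the main theorem.

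First I would compute $\bar{\mu}(\alpha)$. Recalling from (\ref{eq38}) that $\bar{\mu}(\alpha) = -\hat{\mu}^{\prime}(\alpha)$, I take the logarithmic derivative of the product formula, giving $\hat{\mu}^{\prime}(\theta)/\hat{\mu}(\theta) = -\sum_{i=1}^{m-1}(i+\theta)^{-1}$. Evaluating at $\theta=\alpha=1$ and using $\hat{\mu}(1)=1$ yields $\hat{\mu}^{\prime}(1) = -\sum_{i=1}^{m-1}(i+1)^{-1} = -\sum_{j=2}^{m}j^{-1} = -(H_{m}-1)$, hence $\bar{\mu}(\alpha) = H_{m}-1$ and $\alpha\bar{\mu}(\alpha) = H_{m}-1$. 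This already produces the exponential factor $e^{-c/(H_{m}-1)}$.

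The more delicate step, and where I expect to spend the most care, is the evaluation of $\mathbb{E}[\hat{\phi}_{m}(\alpha)]$. Writing $S_{k} := \sum_{i=2}^{k} Y_{i}$ (with $S_{1}=0$) for the successive key-acquisition times at the root, I would first observe that the step characteristic admits the indicator representation $\phi_{m}(t) = \sum_{k=1}^{m-1}\mathds{1}_{\{t\geq S_{k}\}}$, since at age $t$ the root stores exactly as many keys as the number of thresholds $S_{k}$ already passed. Because $\alpha=1$, the prefactor in $\hat{\phi}(\theta) = \theta\int_{0}^{\infty}e^{-\theta t}\phi(t)\,{\rm d}t$ equals $1$, so interchanging the finite sum and the integral gives $\hat{\phi}_{m}(1) = \sum_{k=1}^{m-1}\int_{S_{k}}^{\infty}e^{-t}\,{\rm d}t = \sum_{k=1}^{m-1}e^{-S_{k}}$. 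Taking expectations and using the independence of the $Y_{i}$ together with $\mathbb{E}[e^{-Y_{i}}] = i/(i+1)$, the products $\mathbb{E}[e^{-S_{k}}] = \prod_{i=2}^{k} i/(i+1) = 2/(k+1)$ telescope, so that $\mathbb{E}[\hat{\phi}_{m}(\alpha)] = \sum_{k=1}^{m-1} 2/(k+1) = 2\sum_{j=2}^{m} j^{-1} = 2(H_{m}-1)$.

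Finally, substituting $\alpha\bar{\mu}(\alpha)=H_{m}-1$ and $\mathbb{E}[\hat{\phi}_{m}(\alpha)]=2(H_{m}-1)$ into Theorem \ref{Teo2}(ii) delivers $\lim_{n\to\infty} n^{-1}|T_{n}^{(m)}| = 2(H_{m}-1)e^{-c/(H_{m}-1)}$ in probability, as claimed. There is no conceptual obstacle here beyond the two computations: the whole statement is a direct corollary of the already-proven main theorem, and the only point demanding attention is getting the characteristic right, i.e.\ using the correct indicator form of $\phi_{m}$ and respecting the normalising convention $\hat{\phi}(\theta)=\theta\int_{0}^{\infty} e^{-\theta t}\phi(t)\,{\rm d}t$ when reducing $\hat{\phi}_{m}(1)$ to $\sum_{k}e^{-S_{k}}$.
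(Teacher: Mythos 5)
Your proposal is correct and follows essentially the same route as the paper: verify $\alpha=1$, compute $\bar{\mu}(1)=-\hat{\mu}'(1)=H_{m}-1$ and $\mathbb{E}[\hat{\phi}_{m}(1)]=2(H_{m}-1)$, and substitute into Theorem \ref{Teo2}(ii). The only (harmless) cosmetic differences are your use of the logarithmic derivative for $\hat{\mu}'(1)$ and the indicator representation of $\phi_{m}$, both of which reproduce the paper's expressions exactly.
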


\begin{proof}
The result follows from Theorem \ref{Teo2} by computing $\bar{\mu}(1)$ and $\mathbb{E}[\hat{\phi}_{m}(1)]$. Notice that (\ref{eq38}) and (\ref{eq17}) show that $\bar{\mu}(1) = - \hat{\mu}^{\prime}(1) = H_{m}-1$. Notice also that 
\begin{eqnarray*}
\hat{\phi}_{m}(1) = \int_{0}^{\infty} e^{-t} \phi_{m}(t) {\rm d} t = 1 + \sum_{k=2}^{m-1} e^{- \sum_{i=2}^{k} Y_{i}}.
\end{eqnarray*}

\noindent Therefore, a direct computation shows that
\begin{eqnarray*}
\mathbb{E}[\hat{\phi}_{m}(1)] = 1 +  \sum_{k=2}^{m-1} \mathbb{E}[e^{- \sum_{i=2}^{k} Y_{i}}]  = 1 +  2 \sum_{k=2}^{m-1} \frac{1}{k+1}  = 2(H_{m}-1),
\end{eqnarray*}

\noindent which concludes the proof.
\end{proof}
 
\subsection{Median-of-$(2 \ell+ 1)$ binary search tree} \label{Median}

The random median-of-$(2\ell+1)$ binary search tree, for $\ell \in \mathbb{N}$, (see e.g.\ \cite{Dev1993}) is a modification of the binary search tree (or $2$-ary search tree), where each internal vertex contains exactly one key, but each external one can contain up to $2\ell$ keys (recall that keys are real numbers). 

This tree is constructed recursively from an initial tree with a single external vertex without any keys. Then one adds keys one by one until the $(2\ell+1)$-th key is placed at this first external vertex  (or to another external one later in the process). The vertex becomes an internal one with two new external vertices as its children, say $v_{L}$ and $v_{R}$. Immediately, the median of the $2\ell+1$ keys at the vertex is computed and put at the external vertex, while the $\ell$ keys that are smaller than the median are put in the left child $v_{L}$ and the $\ell$ keys that are larger than the median are put in the right child $v_{R}$.  One continues by adding new keys to the root and send them to the left or to the right whenever they are smaller or larger than the median of the first $2\ell+1$ keys. Finally, one iterates this procedure in an obvious way in order to grow the tree until $n \in \mathbb{N}$ keys have been added. We denote by $T_{n}^{(\ell)}$ the random median-of-$(2\ell + 1)$ binary search tree with $n$ keys. 

Following \cite[Section 8]{Holm2017}, one can construct a median-of-$(2\ell + 1)$ binary search tree via a CMJ-process. Start with one vertex (the root) with $\ell$ keys (notice that this is not a problem because the first $\ell$ keys always go there). Then each external vertex will contain between $\ell$ and $2\ell$ keys, throughout the process. On the other hand, a vertex acquires $\ell+1$ additional keys after successive independent waiting times $Y_{1}, \dots, Y_{\ell+1}$, where $Y_{i}$ has exponential distribution of parameter $\ell+i$, for $i=1, \dots, \ell+1$. At the time the $(\ell+1)$-th key arrives, the vertex immediately gets $2$ children where each of them contains $\ell$ keys. Therefore, it should be clear that the CMJ-process related to the median-of-$(2\ell+ 1)$ binary search tree has birth times $\xi_{1}=\xi_{2} = \sum_{i=1}^{\ell+1} Y_{i}$ (in distribution) and where each individual has life time $\lambda \equiv \infty$. In this case, $N=2$. Define the characteristic 
\begin{eqnarray*}
\phi_{\ell}(t) = \left\{ \begin{array}{lcl}
             \ell+k, & \mbox{  } & \sum_{i=1}^{k} Y_{i} \leq t < \sum_{i=1}^{k+1} Y_{i}, \, \, \, \, 0 \leq k \leq \ell, \\
              1  & \mbox{  } & \sum_{i=1}^{\ell+1} Y_{i} \leq t, \\
              \end{array}
    \right.
\end{eqnarray*}

\noindent for $t \geq 0$ (with the convention $\sum_{i=1}^{0}Y_{i} = 0$). Therefore, the tree $T_{n}^{\phi_{\ell}}$, in Definition \ref{def1} is a median-of-$(2\ell + 1)$ binary search tree with $n$ keys. In this example, notice that $\hat{\Xi}(\theta) = 2\exp(-\theta \sum_{i=1}^{\ell+1} Y_{i})$. Hence,
\begin{eqnarray} \label{eq18}
\hat{\mu}(\theta) = \mathbb{E} \left[ 2\exp \left(-\theta \sum_{i=1}^{\ell+1} Y_{i} \right)\right] =2 \prod_{i=1}^{\ell+1} \frac{\ell+i}{\ell+i+\theta}, \hspace*{5mm} \theta > - (\ell +1).
\end{eqnarray}

\noindent We deduce that the {\sl Malthusian parameter} is $\alpha =1$. Furthermore, it is not difficult to see that 
\begin{eqnarray*}
Var (\hat{\Xi}(\theta)) = 2 \hat{\mu}(2\theta) - \hat{\mu}(\theta)^{2} < \infty, \hspace*{5mm} \theta > - (\ell +1)/2.
\end{eqnarray*}

\noindent Thus, it should be plain that all the conditions $({\bf A1})$–$({\bf A7})$ are satisfied. Consequently, Theorem \ref{Teo2} implies the following result.  

\begin{corollary}
In the supercritical regime, we have that $\lim_{n \rightarrow \infty} n^{-1} |T_{n}^{(\ell)}| = (\ell +1)(H_{2 \ell +2}-H_{\ell +1}) e^{-c(H_{2 \ell +2}-H_{\ell +1})^{-1}}$ in probability. 
\end{corollary}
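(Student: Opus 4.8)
The plan is to follow exactly the template of the preceding corollary (the $m$-ary search tree): invoke Theorem \ref{Teo2}(ii) for the characteristic $\phi_{\ell}$, whose Malthusian parameter we have already identified as $\alpha = 1$ and for which conditions $({\bf A1})$--$({\bf A7})$ have been verified above. This reduces the whole problem to two explicit computations, namely $\bar{\mu}(1) = -\hat{\mu}^{\prime}(1)$ and $\mathbb{E}[\hat{\phi}_{\ell}(1)]$, which are then substituted into the formula $e^{-c/(\alpha \bar{\mu}(\alpha))}\mathbb{E}[\hat{\phi}(\alpha)]$ supplied by Theorem \ref{Teo2}(ii). The target constants $H_{2\ell+2}-H_{\ell+1}$ and $(\ell+1)(H_{2\ell+2}-H_{\ell+1})$ strongly suggest that both computations collapse via telescoping.

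First I would compute $\bar{\mu}(1)$. Combining (\ref{eq38}) with logarithmic differentiation of the product formula (\ref{eq18}) gives
\[
\frac{\hat{\mu}^{\prime}(\theta)}{\hat{\mu}(\theta)} = -\sum_{i=1}^{\ell+1} \frac{1}{\ell+i+\theta}.
\]
Evaluating at $\theta = 1$ and using $\hat{\mu}(1) = 1$ (the Malthusian relation), one obtains
\[
\bar{\mu}(1) = -\hat{\mu}^{\prime}(1) = \sum_{i=1}^{\ell+1} \frac{1}{\ell+i+1} = \sum_{j=\ell+2}^{2\ell+2} \frac{1}{j} = H_{2\ell+2} - H_{\ell+1}.
\]

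Next I would compute $\mathbb{E}[\hat{\phi}_{\ell}(1)]$. Writing $S_{k} = \sum_{i=1}^{k} Y_{i}$ with $S_{0} = 0$ and integrating the step characteristic against $e^{-t}$ yields $\hat{\phi}_{\ell}(1) = \sum_{k=0}^{\ell}(\ell+k)(e^{-S_{k}} - e^{-S_{k+1}}) + e^{-S_{\ell+1}}$; an Abel summation collapses this to $\hat{\phi}_{\ell}(1) = \ell + \sum_{k=1}^{\ell} e^{-S_{k}} + (1-2\ell)e^{-S_{\ell+1}}$. Taking expectations and using that the $Y_{i}$ are independent exponentials of parameter $\ell+i$, so that $\mathbb{E}[e^{-Y_{i}}] = (\ell+i)/(\ell+i+1)$ and hence
\[
\mathbb{E}[e^{-S_{k}}] = \prod_{i=1}^{k} \frac{\ell+i}{\ell+i+1} = \frac{\ell+1}{\ell+k+1}
\]
telescopes, the constant contributions reduce to $\tfrac12$ and I obtain
\[
\mathbb{E}[\hat{\phi}_{\ell}(1)] = \tfrac12 + (\ell+1)(H_{2\ell+1} - H_{\ell+1}) = (\ell+1)(H_{2\ell+2} - H_{\ell+1}),
\]
where the last equality uses $(\ell+1)/(2\ell+2) = \tfrac12$. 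Substituting $\bar{\mu}(1)$ and $\mathbb{E}[\hat{\phi}_{\ell}(1)]$ into Theorem \ref{Teo2}(ii) then gives precisely the claimed limit.

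The main obstacle is the evaluation of $\mathbb{E}[\hat{\phi}_{\ell}(1)]$: unlike in the $m$-ary case the characteristic takes $\ell+1$ distinct values before settling at $1$, so one must carry out the Abel summation carefully and keep track of the boundary terms (the $k=0$ value $\ell$ and the final $(\ell+1)$-th term) so that the stray constants cancel cleanly to $\tfrac12$. By contrast, the telescoping of $\mathbb{E}[e^{-S_{k}}]$ and the differentiation giving $\bar{\mu}(1)$ are routine.
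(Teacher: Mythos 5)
Your proposal is correct and follows exactly the paper's route: apply Theorem \ref{Teo2}(ii) with $\alpha=1$ and reduce everything to the two constants $\bar{\mu}(1)=-\hat{\mu}^{\prime}(1)=H_{2\ell+2}-H_{\ell+1}$ and $\mathbb{E}[\hat{\phi}_{\ell}(1)]=(\ell+1)(H_{2\ell+2}-H_{\ell+1})$. The only difference is that where the paper dismisses the evaluation of $\mathbb{E}[\hat{\phi}_{\ell}(1)]$ as ``a simple but tedious computation,'' you actually carry out the Abel summation and the telescoping of $\mathbb{E}[e^{-S_{k}}]=(\ell+1)/(\ell+k+1)$ correctly, including the cancellation of the boundary terms to $\tfrac12$.
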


\begin{proof}
By Theorem \ref{Teo2}, we only need to compute $\bar{\mu}(1)$ and $\mathbb{E}[\hat{\phi}_{\ell}(1)]$. Notice that (\ref{eq38}) and (\ref{eq18}) show that $\bar{\mu}(1) = - \hat{\mu}^{\prime}(1) = H_{2 \ell +2}-H_{\ell +1}$. Notice also that 
\begin{eqnarray*}
\hat{\phi}_{\ell}(1) = \int_{0}^{\infty} e^{-t} \hat{\phi}_{\ell}(t) {\rm d} t = \sum_{k=0}^{\ell} \left(e^{-t\sum_{i=1}^{k} Y_{i}} - e^{-t\sum_{i=1}^{k+1} Y_{i}} \right) + e^{-t\sum_{i=1}^{\ell+1} Y_{i}}.
\end{eqnarray*}

\noindent Therefore, a simple but tedious computation implies that $
\mathbb{E}[\hat{\phi}_{\ell}(1)] = (\ell +1)(H_{2 \ell +2}-H_{\ell +1})$.
\end{proof}

\subsection{Fragmentation trees} \label{fragmentation}

In this section, we study family trees induced by fragmentation processes. These processes were introduced in \cite{Kol1941}; see also \cite{Beb2006} and \cite{Svante2008} for general background and further references. Fix $b \geq 2$ and consider a random vector $\mathbf{V} = (V_{1}, \dots, V_{b})$;  this is the so-called dislocation law. Assume that $0 \leq V_{i} < 1$ a.s., for $i =1, \dots, b$, and $\sum_{i=1}^{b} V_{i} = 1$, i.e.\ $\mathbf{V}$ belongs to the standard simplex. 

We then describe the construction of the fragmentation tree. Start with a vertex (the root) with mass $x_{0}$. This vertex has $b$ children with masses $x_{0}V_{1}, \dots, x_{0}V_{b}$, i.e.\ we break $x_{0}$ into $b$ pieces with masses driven by the dislocation law $\mathbf{V}$. Consider a threshold $x_{1} \in (0, x_{0}]$, and continue recursively with each vertex that has mass larger than $x_{1}$, using new (independent) copies of $\mathbf{V}$ each time. The process terminates a.s.\ after a finite number of steps, leaving a finite set of vertices (or fragments) with masses smaller than $x_{1}$. We regard the vertices of mass larger than $x_{1}$ that occur during this process as the internal vertices and the resulting vertices of mass strictly less than $x_{1}$ as external. Notice that the fragmentation tree depends only on the ratio $x_{0}/x_{1}$, so we denote it by $T_{x_{0}/x_{1}}^{\rm f}$. 

One can relate the fragmentation process to a CMJ-process by regarding a fragment of mass $x$ as born at time $\log(x_{0}/x)$. In other words, a vertex will have $b$ children that are born at times $\xi_{i} = - \log V_{i}$, for $i =1, \dots, b$ (observe that $N=b$ in this case). If $V_{i}=0$, one has that $\xi_{i} = \infty$, meaning that this child is not born, and thus, a vertex has less than $b$ children. Notice also that the life time $\lambda = \infty$. Finally, it is easy to see that the fragmentation tree $T_{x_{0}/x_{1}}^{\rm f}$ is the same as the family tree of this CMJ-process at time $\log(x_{0}/x_{1})$, i.e. $T(\log(x_{0}/x_{1}))$ in the notation of Section \ref{Model}.

By using the characteristic $\phi \equiv 1$, we define the fragmentation tree $T_{n}^{\rm f} := T(\tau_{n}^{\phi})$ of fixed size $n \in \mathbb{N}$ as in Definition \ref{def1}. This means that we choose a threshold $x_{1} > 0$ to be the mass of the $n$-th largest fragment in the process, so that there will be exactly $n$ fragments of size $x_{1}$ (unless there is a tie).  In this case, notice that $\hat{\Xi}(\theta) = \sum_{i=1}^{b} e^{- \theta \xi_{i}} =  \sum_{i=1}^{b} V_{i}^{\theta}$. Then
\begin{eqnarray} \label{eq22}
\hat{\mu}(\theta) = \sum_{i=1}^{b} \mathbb{E}[V_{i}^{\theta}], \hspace*{5mm} \theta \geq 0.
\end{eqnarray}

\noindent Thus, one concludes that the {\sl Malthusian parameter} is $\alpha=1$. This has the consequence that the martingale $W^{\psi}$ (with $p_{n} \equiv 1$), in Section \ref{ProofM},  is constant $1$. Then one has that its limit $W^{\psi}(\infty) \equiv 1$. 
Furthermore, one also has that $Var (\hat{\Xi}(\theta) ) < \infty$, for $\theta \geq 0$. Thus, the conditions ({\bf A1})–({\bf A7}) are satisfied and Theorem \ref{Teo2} implies the following result. 

\begin{corollary}
In the supercritical regime, we have that
$\lim_{n \rightarrow \infty} n^{-1}|T_{n}^{\rm f}| = e^{-\frac{c}{ \beta}}$, in probability, where $\beta := \sum_{i=1}^{b} \mathbb{E}[V_{i} \log(1/V_{i})]$.
\end{corollary}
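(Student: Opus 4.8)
The plan is to read off the corollary as the specialization of Theorem \ref{Teo2}(ii) to the fragmentation CMJ-process with characteristic $\phi\equiv 1$. Since conditions $({\bf A1})$–$({\bf A7})$ have already been checked for this model, the theorem applies verbatim and the limit in probability of $n^{-1}|T_n^{\rm f}|$ equals $e^{-c/(\alpha\bar\mu(\alpha))}\,\mathbb{E}[\hat\phi(\alpha)]$; the whole task is therefore to evaluate the three constants $\alpha$, $\bar\mu(\alpha)$ and $\mathbb{E}[\hat\phi(\alpha)]$.

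First I would record the Malthusian parameter. From the formula $\hat\mu(\theta)=\sum_{i=1}^b\mathbb{E}[V_i^\theta]$ in (\ref{eq22}) and the simplex constraint $\sum_{i=1}^b V_i=1$ a.s., one gets $\hat\mu(1)=\mathbb{E}\big[\sum_{i=1}^b V_i\big]=1$, so $\alpha=1$. Next, as in the earlier corollaries of this section, I would compute $\bar\mu(1)$ via the identity $\bar\mu(\alpha)=-\hat\mu'(\alpha)$ furnished by (\ref{eq38}). Differentiating $\hat\mu(\theta)=\sum_{i=1}^b\mathbb{E}[V_i^\theta]$ term by term (the interchange of $\frac{{\rm d}}{{\rm d}\theta}$ and $\mathbb{E}$ being exactly the content of (\ref{eq38})) gives $\hat\mu'(\theta)=\sum_{i=1}^b\mathbb{E}[V_i^\theta\log V_i]$, and evaluating at $\theta=1$ yields $\hat\mu'(1)=\sum_{i=1}^b\mathbb{E}[V_i\log V_i]=-\beta$; hence $\bar\mu(1)=\beta$. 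Finally, since $\phi\equiv 1$, the definition of the Laplace transform gives $\hat\phi(\theta)=\theta\int_0^\infty e^{-\theta t}\,{\rm d}t=1$ for all $\theta>0$, so $\mathbb{E}[\hat\phi(1)]=1$. Plugging $\alpha=1$, $\bar\mu(1)=\beta$ and $\mathbb{E}[\hat\phi(1)]=1$ into Theorem \ref{Teo2}(ii) produces the stated limit $e^{-c/\beta}$.

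No genuine obstacle is expected: the argument is pure bookkeeping on top of Theorem \ref{Teo2}. The only points warranting a remark are that $\bar\mu(1)=\beta$ is indeed finite and strictly positive, as required by Remarks \ref{rem4} and \ref{rem2}. Finiteness is automatic because the map $x\mapsto x\log(1/x)$ is bounded by $1/e$ on $[0,1)$, so $\beta\le b/e$, while strict positivity follows from $V_i<1$ a.s.\ (together with $b\ge 2$ and $\sum_i V_i=1$, which prevents all the mass from concentrating on a single fragment). One may additionally note, as the surrounding text does, that $\alpha=1$ forces the Nerman martingale limit $W^\psi(\infty)\equiv 1$, which makes the convergence deterministic; but this is not needed, since Theorem \ref{Teo2}(ii) already delivers convergence in probability to the constant $e^{-c/\beta}$.
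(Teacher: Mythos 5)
Your proposal is correct and follows essentially the same route as the paper: apply Theorem \ref{Teo2}(ii) with $\phi\equiv 1$ (so $\mathbb{E}[\hat\phi(\alpha)]=1$), read off $\alpha=1$ from $\hat\mu(1)=\mathbb{E}[\sum_i V_i]=1$, and compute $\bar\mu(1)=-\hat\mu'(1)=\sum_{i=1}^b\mathbb{E}[V_i\log(1/V_i)]=\beta$ via (\ref{eq38}) and (\ref{eq22}). Your additional remarks on the finiteness and strict positivity of $\beta$ are correct but not spelled out in the paper's (very terse) proof.
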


\begin{proof}
The result follows from Theorem \ref{Teo2}. Since 
$\phi \equiv 1$, one only needs to compute $\bar{\mu}(1)$. But notice that (\ref{eq38}) and (\ref{eq22}) imply that $\bar{\mu}(1) = - \hat{\mu}^{\prime}(1) = \beta$.
\end{proof}

\begin{example}[Binary splitting]
Consider $b =2$ and $\mathbf{V} = (V_{1}, V_{2}) =(V_{1}, 1-V_{1})$, where $V_{1}$ is an uniform random variable on $(0,1)$. Thus, at each fragmentation event, the fragment is split into two parts, with uniformly random sizes.  In the corresponding CMJ-process the birth times $\xi_{1}$ and $\xi_{2}$ are exponential random variables with parameter $1$, where one of them determines the other by
$e^{-\xi_{1}} + e^{-\xi_{2}} = 1$. Furthermore, $\hat{\mu}(\theta) = (1+\theta)^{-1}$, for $\theta \geq 0$, and $\bar{\mu}(1) = 1/2$. Finally, note also that there are similarities with the CMJ-process associated with the binary search tree in Section \ref{general}, Table \ref{table1};  the difference is that there $\xi_{1}$ and $\xi_{2}$ are independent, while here they are dependent. 
\end{example}

Finally, it is important to mention that the split trees defined by Devroye \cite{Luc1999} are related to fragmentation trees. A split tree is a $b$-ary tree defined using a number of balls that enter the root and are distributed (randomly and recursively) to the subtrees of the root and further down in the tree according to certain rules that are based on a splitting law $\mathbf{V}$; see \cite{Luc1999} for details. For instance, binary search trees are a particular type of split tree. Percolation on split trees will be studied in another paper.

\subsection{Homogeneous CMJ-trees} \label{Homo}

Let $\Lambda$ be a finite positive measure on $(0, \infty]$ with total mass $b := \Lambda((0, \infty])$ such that $m:= \int_{(0, \infty]} t \Lambda({\rm}dt)$ satisfies
\begin{equation} \label{cond3}
 1 < m < \infty.  \tag{\bf E3}
\end{equation}

\noindent Consider a CMJ-process with birth process $\Xi$ whose intensity measure is given by $\mu({\rm d}t) = {\rm d}t \Lambda((t, \infty])$. Moreover, each individual has lifetime given by a random variable $\lambda$ with distribution $\Lambda(\cdot) /b$. Notice that conditional on the birth time and lifetime, the birth point process of an individual is distributed as a Poisson point process during his life. One can say that the CMJ-process is homogeneous (constant birth rate) and binary (birth occurs singly). Set 
\begin{eqnarray*}
\Psi(\theta) = \theta - \int_{(0, \infty]}(1-e^{-\theta t}) \Lambda({\rm d} t), \hspace*{5mm} \theta \geq 0,
\end{eqnarray*}

\noindent and observe that $\Psi$ is a convex function such that $\Psi(0+) =0$ and $\Psi^{\prime}(0+) = 1-m < 0$. Hence there exists a unique $\alpha > 0$ such that $\Psi(\alpha) = 0$.



In this section, we are interested in study Bernoulli bond-percolation on the family tree of this CMJ-process. More precisely, we consider the characteristic $\phi \equiv 1$ and define the CMJ-tree $T_{n}^{{\rm hom}} := T(\tau^{\phi}_{n})$ as in Definition \ref{def1}. The family tree of this particular CMJ-process has been called splitting tree in \cite{Gei1996}, \cite{Gei1997} and \cite{Lamb2010}. In these works, it has been further studied the case when $\Lambda$ is not necessary finite. But for simplicity we have decided to restrict ourselves to finite measures. Nevertheless, one can apply our results to the general case.

In \cite[Chapter 3]{Rich2011}, the author studied this CMJ-process under neutral rare mutations, where mutations occur at birth with probability $1-p \in [0,1]$. The difference with our setting is that in \cite[Chapter 3]{Rich2011} the probability of mutation (or percolation parameter) does not dependent on the ``size'' of the tree. Therefore, the result in this section may be of independent interest. 

We now check that assumptions $({\bf A1})$-$({\bf A7})$ are satisfied. Clearly, $({\bf A1})$-$({\bf A2})$ are fulfilled by the assumptions made on $\Lambda$. Notice that in this case $({\bf A3})$ is not fulfilled. Nevertheless, \cite[Propostion 2.1; see also (2.1)]{Rich2011} shows that the extinction probability $\mathbb{P}(Z(\infty) < \infty) = 1-\alpha/b < 1$. This has the consequence that the limit of the martingale $W^{\psi}$ (with $p_{n}\equiv 1$), in Section \ref{ProofM}, is strictly positive (i.e.\ $W^{\psi}(\infty) > 0$ a.s.) on the event $\{Z(\infty) = \infty\}$; see \cite[Corollary 4.2]{JaNe}. Thus our results hold conditioned on the event $\{Z(\infty) = \infty\}$ since $({\bf A3})$ is only needed to guarantee that $W^{\psi}(\infty) > 0$ almost surely. On the other hand, 
\begin{eqnarray} \label{eq23}
\hat{\mu}(\theta) = \int_{0}^{\infty} \int_{(t, \infty]} e^{-\theta t} {\rm d} t \Lambda({\rm d} u) = \int_{(0, \infty]} \frac{1-e^{-\theta t}}{\theta}  \Lambda({\rm d} t) = \frac{\theta-\Psi(\theta)}{\theta}, \hspace*{5mm} \theta \geq 0,
\end{eqnarray}

\noindent  which clearly shows that $({\bf A4})$ is satisfied. Moreover, the {\sl Malthusian parameter} is given by $\alpha > 0$. Notice also that the Campbell's formula implies that  
\begin{eqnarray*}
Var (\hat{\Xi}(\theta)) = \int_{0}^{\infty} \int_{(t, \infty]} e^{-2\theta t} {\rm d} t \Lambda({\rm d} u) =  \frac{2\theta-\Psi(2\theta)}{2\theta} < \infty, \hspace*{5mm} \theta \geq 0,
\end{eqnarray*}

\noindent which implies $({\bf A5})$. Finally, $({\bf A6})$-$({\bf A7})$ follows immediately since $\phi \equiv 1$. 
Therefore, Theorem \ref{Teo2} implies the following result. 

\begin{corollary}
In the supercritical regime and under $(\ref{cond3})$, we have conditional on the event $\{Z(\infty) = \infty\}$ that $\lim_{n \rightarrow \infty} n^{-1} |T_{n}^{{\rm hom}}| = e^{-\frac{c}{\Psi^{\prime}(\alpha)}}$, in probability. 
\end{corollary}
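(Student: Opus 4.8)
The plan is to apply Theorem \ref{Teo2}(ii) directly, since the preceding discussion has already checked that this homogeneous CMJ-process satisfies $({\bf A1})$-$({\bf A7})$, with the one caveat that $({\bf A3})$ fails and is replaced by working on the survival event. Concretely, Theorem \ref{Teo2}(ii) in the supercritical regime gives
\begin{eqnarray*}
\lim_{n \rightarrow \infty} n^{-1} |T_{n}^{{\rm hom}}| = e^{-\frac{c}{\alpha \bar{\mu}(\alpha)}} \mathbb{E}[\hat{\phi}(\alpha)]
\end{eqnarray*}
in probability (on $\{Z(\infty) = \infty\}$), so the whole task reduces to evaluating the two constants $\mathbb{E}[\hat{\phi}(\alpha)]$ and $\alpha \bar{\mu}(\alpha)$ for the present data and recognising that their combination produces the advertised $e^{-c/\Psi^{\prime}(\alpha)}$.

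First I would dispose of the prefactor. Since the characteristic is $\phi \equiv 1$, its Laplace transform is $\hat{\phi}(\theta) = \theta \int_{0}^{\infty} e^{-\theta t} {\rm d} t = 1$ for every $\theta > 0$, so $\mathbb{E}[\hat{\phi}(\alpha)] = 1$ and only the exponential factor survives. The remaining, and essentially only computational, step is the identity $\alpha \bar{\mu}(\alpha) = \Psi^{\prime}(\alpha)$. For this I would start from the explicit expression (\ref{eq23}), namely $\hat{\mu}(\theta) = 1 - \Psi(\theta)/\theta$, and differentiate it; recalling from (\ref{eq27})-(\ref{eq38}) that $\bar{\mu}(\alpha) = -\hat{\mu}^{\prime}(\alpha)$, a direct computation yields
\begin{eqnarray*}
\hat{\mu}^{\prime}(\theta) = - \frac{\Psi^{\prime}(\theta)\theta - \Psi(\theta)}{\theta^{2}}.
\end{eqnarray*}
The crucial simplification is that at the Malthusian parameter the defining relation $\Psi(\alpha) = 0$ annihilates the second term, leaving $\hat{\mu}^{\prime}(\alpha) = -\Psi^{\prime}(\alpha)/\alpha$. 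Hence $\bar{\mu}(\alpha) = \Psi^{\prime}(\alpha)/\alpha$ and $\alpha \bar{\mu}(\alpha) = \Psi^{\prime}(\alpha)$, which substituted above gives the claimed limit.

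The arithmetic is routine, so the only genuine point requiring care is the failure of $({\bf A3})$: the splitting tree can become extinct, with extinction probability $1 - \alpha/b$ by \cite[Proposition 2.1]{Rich2011}. I would therefore stress that the transfer from Theorem \ref{Teo2} is valid only after conditioning on $\{Z(\infty) = \infty\}$, because the proof of Theorem \ref{Teo2} invokes $W^{\psi}(\infty) > 0$ almost surely, whereas here \cite[Corollary 4.2]{JaNe} secures $W^{\psi}(\infty) > 0$ only almost surely on the survival event. Once this conditioning is fixed, the argument used to prove Theorem \ref{Teo2} carries over unchanged, and the identity $\alpha \bar{\mu}(\alpha) = \Psi^{\prime}(\alpha)$ then closes the proof.
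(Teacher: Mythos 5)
Your proposal is correct and follows essentially the same route as the paper: apply Theorem \ref{Teo2}(ii) with $\phi \equiv 1$ (so $\mathbb{E}[\hat{\phi}(\alpha)]=1$), handle the failure of $({\bf A3})$ by conditioning on $\{Z(\infty)=\infty\}$, and reduce everything to the identity $\alpha\bar{\mu}(\alpha)=\Psi^{\prime}(\alpha)$. The only (cosmetic) difference is that the paper obtains $\bar{\mu}(\alpha)=\Psi^{\prime}(\alpha)/\alpha$ by directly integrating $t e^{-\alpha t}$ against $\mu({\rm d}t)={\rm d}t\,\Lambda((t,\infty])$ via Fubini, whereas you differentiate $\hat{\mu}(\theta)=1-\Psi(\theta)/\theta$ and use $\Psi(\alpha)=0$; both are valid and your computation is, if anything, slightly cleaner.
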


\begin{proof}
The result follows from Theorem \ref{Teo2} by computing $\bar{\mu}(\alpha)$ since $\phi \equiv 1$. This follows from (\ref{eq27}),
\begin{eqnarray*}
\bar{\mu}(\alpha) = \int_{0}^{\infty} t e^{- \alpha t} \mu({\rm d} t) =  \int_{0}^{\infty} \int_{(t, \infty]} t e^{-\alpha t} {\rm d} t \Lambda({\rm d} u) = \int_{(0, \infty]} \left(-\frac{t e^{-\alpha t}}{\alpha} + \frac{1-e^{-\alpha t}}{\alpha^{2}} \right) \Lambda({\rm d} t) = \frac{\Psi^{\prime}(\alpha)}{\alpha}.
\end{eqnarray*}
\end{proof}

\section{Appendix: Proofs of Lemmas \ref{lemma1} and \ref{lemma2}} \label{prel} 

In this section we establish some general results on the long time behavior of the CMJ-process with neutral mutations that may be of independent interest. It will be helpful to write $p$ rather than $p_{n}$, omitting the integer $n$ from the notation. To be more precise, we consider that the percolation parameter is a real number $p \in [0,1]$ and study the behavior of the $\phi$-counted clonal process $Z_{\varnothing}^{(p), \phi} = (Z_{\varnothing}^{(p), \phi}(t), t \geq 0)$ as $p \rightarrow 1$ and $t \rightarrow \infty$. Recall that  $\Xi^{(p)}$ denotes the generic birth process of the clonal CMJ-process whose intensity measure $\mu^{(p)}$ defined in (\ref{eq3}).

Notice that $({\bf A4})$ implies that there exists $p^{\ast} \in (0,1)$ such that for $p \in [p^{\ast},1]$ there is $\alpha_{p} > 0$ (the {\sl Malthusian parameter} of $\mu^{(p)}$) such that $\hat{\mu}^{(p)}(\alpha_{p}) =1$. This implies that $p \mathbb{E}[N] > 1$ (i.e. the clonal process is supercritical). Since $\alpha_{p} \rightarrow \alpha$ as $p \rightarrow 1$, we choose $p^{\ast}$ such that $0 < \theta_{1} < \alpha_{p^{\ast}}$ where $\theta_{1}$ satisfies $({\bf A4})$-$({\bf A5})$. Furthermore, we also consider $p^{\ast}$ such that 
$0 < \theta_{2} < \alpha_{p^{\ast}}$ and $0 < \theta_{3} < 2\alpha_{p^{\ast}}$, where $\theta_{2}$ and $\theta_{3}$ satisfy $({\bf A6})$ and $({\bf A7})$, respectively.

We start by recalling some well-known results about the moments of $Z_{\varnothing}^{(p), \phi}$. For $k \in \mathbb{N} \cup \{ 0 \}$, we denote by  $\nu^{\ast k}$ the $k$-fold convolution of a measure $\nu$ on $[0, \infty)$ (here $\nu^{\ast 0}$ is a unit point mass at $0$). 

\begin{theorem} \label{Teo1}
Assume that conditions $({\bf A1})$-$({\bf A4})$ are fulfilled. For a characteristic $\phi$ that may depend on $p$, we have that 
\begin{eqnarray*}
\mathbb{E}\left[ Z_{\varnothing}^{(p), \phi}(t) \right] = \int_{0}^{t} \mathbb{E}[\phi(t-s)] \sum_{k=0}^{\infty} (\mu^{(p)})^{\ast k} ({\rm d} s), \hspace*{5mm} t \geq 0.
\end{eqnarray*}

\noindent Furthermore, if $\mathbb{E}[\phi(t)]$ is bounded on finite intervals,
\begin{eqnarray*}
Var \left( Z_{\varnothing}^{(p), \phi}(t) \right) = \int_{0}^{t} h_{\varnothing}(t-s) \sum_{k=0}^{\infty} (\mu^{(p)})^{\ast k} ({\rm d} s), \hspace*{5mm} t \geq 0,
\end{eqnarray*}

\noindent with 
\begin{eqnarray}
h_{\varnothing}^{(p)}(t) = Var \left(\phi_{\varnothing}(t) + \int_{0}^{t} \mathbb{E}\left[ Z_{\varnothing}^{(p), \phi}(t-s) \right] \Xi_{\varnothing}^{(p)}({\rm d} s) \right), \hspace*{5mm} t \geq 0, \label{eq1}
\end{eqnarray}

\noindent and where $(\Xi_{\varnothing}^{(p)}, \phi_{\varnothing})$ is the birth process and weight associated to the progenitor of the population.
\end{theorem}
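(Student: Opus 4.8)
The plan is to exploit the fundamental branching decomposition of the clonal process obtained by conditioning on the reproduction of the progenitor, which turns both moment computations into renewal equations on $[0,\infty)$ whose solutions are expressed through the renewal measure $U^{(p)} := \sum_{k=0}^{\infty} (\mu^{(p)})^{\ast k}$. The starting point is the pathwise identity
\begin{eqnarray*}
Z_{\varnothing}^{(p), \phi}(t) = \phi_{\varnothing}(t) + \int_{0}^{t} \widetilde{Z}^{[s]}(t-s) \, \Xi_{\varnothing}^{(p)}({\rm d}s), \hspace*{5mm} t \geq 0,
\end{eqnarray*}
where $(\Xi_{\varnothing}^{(p)}, \phi_{\varnothing})$ is the clonal reproduction point process and characteristic of the root, and where, for each atom of $\Xi_{\varnothing}^{(p)}$ at location $s$, the process $\widetilde{Z}^{[s]}$ is an independent copy of $Z_{\varnothing}^{(p), \phi}$, these copies being mutually independent and independent of $(\Xi_{\varnothing}^{(p)}, \phi_{\varnothing})$. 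This merely records that the subpopulations founded by the clonal children of the root are independent copies of the whole clonal process, shifted by their birth times; recall that $\Xi_{\varnothing}^{(p)}$ has intensity $\mu^{(p)} = p\mu$ by the thinning property (\ref{eq3}).

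For the mean, I set $m^{(p)}(t) := \mathbb{E}[Z_{\varnothing}^{(p), \phi}(t)]$ and take expectations in the decomposition. Conditioning on $\Xi_{\varnothing}^{(p)}$ and using $\mathbb{E}[\widetilde{Z}^{[s]}(t-s) \mid \Xi_{\varnothing}^{(p)}] = m^{(p)}(t-s)$, Campbell's formula yields the renewal equation
\begin{eqnarray*}
m^{(p)}(t) = \mathbb{E}[\phi(t)] + \int_{0}^{t} m^{(p)}(t-s)\, \mu^{(p)}({\rm d}s), \hspace*{5mm} t \geq 0.
\end{eqnarray*}
Since $({\bf A1})$ gives $\mu^{(p)}(\{0\}) = p\,\mu(\{0\}) < 1$, the measure $U^{(p)}$ is locally finite, and the unique locally bounded solution is $m^{(p)}(t) = \int_{0}^{t}\mathbb{E}[\phi(t-s)]\,U^{(p)}({\rm d}s)$, which is exactly the first claimed formula; finiteness on bounded intervals follows because $({\bf A4})$ forces $\mu^{(p)}([0,t]) < \infty$.

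For the variance, write $v^{(p)}(t) := Var(Z_{\varnothing}^{(p), \phi}(t))$ and apply the law of total variance conditionally on the pair $(\Xi_{\varnothing}^{(p)}, \phi_{\varnothing})$. Given this pair the summands $\widetilde{Z}^{[s]}(t-s)$ are independent and $\phi_{\varnothing}(t)$ is deterministic, so the conditional variance is $\int_{0}^{t} v^{(p)}(t-s)\,\Xi_{\varnothing}^{(p)}({\rm d}s)$ while the conditional mean is $\phi_{\varnothing}(t) + \int_{0}^{t} m^{(p)}(t-s)\,\Xi_{\varnothing}^{(p)}({\rm d}s)$. Taking the expectation of the first term by Campbell's formula and recognising the variance of the second term as precisely $h_{\varnothing}^{(p)}(t)$ from (\ref{eq1}) produces the renewal equation
\begin{eqnarray*}
v^{(p)}(t) = h_{\varnothing}^{(p)}(t) + \int_{0}^{t} v^{(p)}(t-s)\, \mu^{(p)}({\rm d}s), \hspace*{5mm} t \geq 0,
\end{eqnarray*}
whose solution $v^{(p)}(t) = \int_{0}^{t} h_{\varnothing}^{(p)}(t-s)\,U^{(p)}({\rm d}s)$ is the second claimed formula.

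The main obstacle I anticipate is the careful justification of the conditional independence structure and of the interchanges of expectation with integration against the random measure $\Xi_{\varnothing}^{(p)}$, i.e.\ the applications of Campbell's formula and Fubini, together with the bookkeeping that the copies $\widetilde{Z}^{[s]}$ attached to distinct atoms are genuinely independent given $(\Xi_{\varnothing}^{(p)}, \phi_{\varnothing})$; this additivity of the conditional variance is exactly what delivers the clean renewal structure. One also has to confirm well-posedness of the renewal equations, which reduces to the local finiteness of $U^{(p)}$ guaranteed by $({\bf A1})$, and to observe that the variance identity is to be read in $[0,\infty]$ should $h_{\varnothing}^{(p)}$ fail to be finite, so that no integrability beyond the stated hypothesis that $\mathbb{E}[\phi(t)]$ is bounded on finite intervals is required.
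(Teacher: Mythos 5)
Your argument is correct in substance, but it is worth noting that the paper does not actually prove this statement from scratch: its proof consists of citing \cite[Theorems 3.1 and 3.2]{JaNe}, together with the observations that ({\bf A1}) and ({\bf A4}) make the renewal measure $\sum_{k}(\mu^{(p)})^{\ast k}$ locally finite and that local boundedness of $\mathbb{E}[\phi(t)]$ makes $\mathbb{E}[Z_{\varnothing}^{(p),\phi}(t)]$ finite so that the variance formula applies. What you have written is essentially a self-contained reconstruction of those two cited results: the branching decomposition over the clonal children of the root, the resulting renewal equations for the mean and (via the law of total variance) for the variance, and their solution by convolution with $U^{(p)}=\sum_k(\mu^{(p)})^{\ast k}$. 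This is exactly the mechanism behind the Jagers--Nerman theorems, so the mathematics agrees; your version buys self-containedness at the cost of having to police the technical points that the citation absorbs. Two of those deserve attention. First, local finiteness of $U^{(p)}$ does not follow from ({\bf A1}) alone, as you assert; one also needs ({\bf A4}), which supplies some $\theta>\alpha$ with $\hat{\mu}^{(p)}(\theta)<1$ and hence $U^{(p)}([0,t])\leq e^{\theta t}(1-\hat{\mu}^{(p)}(\theta))^{-1}<\infty$ --- this is precisely why the paper invokes both assumptions. Second, for the mean identity the theorem imposes no integrability on $\phi$, so ``the unique locally bounded solution'' of the renewal equation is not quite the right justification (the solution need not be locally bounded, or even finite); the clean route is either to iterate the renewal equation for nonnegative solutions, or to bypass it entirely by summing $\mathbb{E}[\phi_u(t-\sigma_u)\mathds{1}_{\{\sigma_u\leq t\}}]$ over individuals generation by generation and applying Tonelli, which yields the formula directly with values in $[0,\infty]$. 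With these repairs your derivation is a complete proof.
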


\begin{proof}
The first claim is a consequence of \cite[Theorem 3.1]{JaNe}. Notice that $({\bf A1})$ and $({\bf A4})$ imply that the measure $\sum_{k=0}^{\infty} (\mu^{(p)})^{\ast k} ({\rm d} t)$ is finite. Moreover, $\mathbb{E}[ Z_{\varnothing}^{(p), \phi}(t)] < \infty$, for $t \geq 0$, since $\mathbb{E}[\phi(t)]$ is bounded on finite intervals. Therefore, the second claim follows from \cite[Theorem 3.2]{JaNe}.
\end{proof}

We next provide an improvement of the results of \cite[Theorem 6.9.2]{Ja} and \cite[Theorem 3.5]{JaNe}  on the asymptotic behaviour of the first and second moment of the clonal CMJ-process. Recall that we write $W^{(p), \phi}_{\varnothing} = (W^{(p), \phi}_{\varnothing}(t), t \geq 0)$ for the process given by $W^{(p), \phi}_{\varnothing}(t) := e^{-t\alpha_{p} } Z^{(p), \phi}_{\varnothing}(t)$, for $t \geq 0$. Set
\begin{eqnarray*}
m_{t}^{(p), \phi} = \mathbb{E} [ W^{(p), \phi}_{\varnothing}(t) ], \hspace*{5mm} t \geq 0.
\end{eqnarray*}

\noindent Recall also that for $p\equiv1$ we sometimes remove the superscript $(p)$ and the subscript $\varnothing$ from the previous notations. That is, we write $W^{\phi} = (W^{\phi}(t), t \geq 0)$ for the processes given by  $W^{\phi}(t) := e^{-t\alpha } Z^{\phi}(t)$ and $m_{t}^{\phi} = \mathbb{E} [ W^{\phi}(t) ]$.

\begin{proposition} \label{Pro1}
Assume that conditions $({\bf A1})$-$({\bf A4})$ are fulfilled. 
\begin{itemize}
\item[(i)] For any characteristic $\phi$ that does not depend on $p$ and that satisfies $({\bf A6})$, we have that
\begin{eqnarray*}
 \lim_{t \rightarrow \infty} \sup_{p \in [p^{\ast}, 1]} \left| m_{t}^{(p), \phi} - m_{\infty}^{(p), \phi} \right | = 0,
\end{eqnarray*}  

\noindent where $m_{\infty}^{(p), \phi}$ is defined in (\ref{eq39}). In particular, $ \sup_{t \geq 0} \sup_{p \in [p^{\ast},1]} m_{t}^{(p), \phi} < \infty$.

\item[(ii)] For the characteristic $\psi$ defined in (\ref{eq10}), we have that 
\begin{eqnarray*}
 \lim_{t \rightarrow \infty} \sup_{p \in [p^{\ast}, 1]} \left| m_{t}^{(p), \psi} - m_{\infty}^{(p), \psi} \right | = 0.
\end{eqnarray*}  

\noindent In particular, $ \sup_{t \geq 0} \sup_{p \in [p^{\ast},1]} m_{t}^{(p), \psi} < \infty$.
\end{itemize}
\end{proposition}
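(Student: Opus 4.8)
The plan is to recognise $m_t^{(p),\phi}$ as a renewal convolution and then upgrade the classical key renewal theorem to a version uniform in $p$. By Theorem \ref{Teo1} together with the identity $e^{-s\alpha_p}(\mu^{(p)})^{\ast k}({\rm d}s) = (\mu_\alpha^{(p)})^{\ast k}({\rm d}s)$ for the probability measure $\mu_\alpha^{(p)}$ of Remark \ref{rem4}, one obtains
\[
m_t^{(p),\phi} = e^{-t\alpha_p}\mathbb{E}[Z_\varnothing^{(p),\phi}(t)] = \int_0^t g_p(t-s)\,U^{(p)}({\rm d}s), \qquad g_p(u):=e^{-\alpha_p u}\mathbb{E}[\phi(u)],
\]
where $U^{(p)}:=\sum_{k\ge0}(\mu_\alpha^{(p)})^{\ast k}$ is the renewal measure of $\mu_\alpha^{(p)}$. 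Since $\int_0^\infty g_p(u)\,{\rm d}u = \alpha_p^{-1}\mathbb{E}[\hat{\phi}(\alpha_p)]$ and the mean of $\mu_\alpha^{(p)}$ equals $\bar{\mu}^{(p)}(\alpha_p)$ (see (\ref{eq35})), the key renewal theorem gives, for each fixed $p$, $\lim_{t\to\infty}m_t^{(p),\phi}=m_\infty^{(p),\phi}$ with $m_\infty^{(p),\phi}$ as in (\ref{eq39}). Thus the only real content of part (i) is the \emph{uniformity} of this convergence over $p\in[p^\ast,1]$.

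I would dispose of part (ii) first, as it is immediate. Indeed $W_\varnothing^{(p),\psi}$ is a square-integrable martingale, so $m_t^{(p),\psi}=\mathbb{E}[W_\varnothing^{(p),\psi}(t)]$ does not depend on $t$; evaluating at $t=0$ gives $m_t^{(p),\psi}=\mathbb{E}[\psi_\varnothing(0)]=\mathbb{E}[\hat{\Xi}^{(p)}(\alpha_p)]=\hat{\mu}^{(p)}(\alpha_p)=1$, which coincides with $m_\infty^{(p),\psi}=1$ (Remark \ref{rem3}). Hence $\sup_{p}\big|m_t^{(p),\psi}-m_\infty^{(p),\psi}\big|\equiv0$ and both assertions of (ii) hold trivially.

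For part (i) I would first record two uniform estimates. By $({\bf A6})$ the constant $M:=\mathbb{E}[\sup_{s\ge0}(e^{-\theta_2 s}\phi(s))]$ is finite, so $\mathbb{E}[\phi(u)]\le Me^{\theta_2 u}$; since $p^\ast$ was chosen with $\theta_2<\alpha_{p^\ast}\le\alpha_p$, this yields the uniform bound $g_p(u)\le Me^{-\delta u}$ with $\delta:=\alpha_{p^\ast}-\theta_2>0$, whence $\sup_p\int_A^\infty g_p\to0$ as $A\to\infty$ and $\sup_p\int_0^\infty g_p<\infty$. Next, $\mu_\alpha^{(p)}([0,h])\le\mu([0,h])\downarrow\mu(\{0\})<1$ as $h\to0$ by $({\bf A1})$, so one may fix $h_0$ and $\rho<1$ with $\sup_p\mu_\alpha^{(p)}([0,h_0])\le\rho$; the elementary renewal bound $\sup_t U^{(p)}([t,t+h_0])\le(1-\rho)^{-1}$ then provides a constant $K$ with $\sup_{t,\,p}U^{(p)}([t,t+1])\le K$. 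Splitting the convolution at a window of length $A$, the tail $\int_0^{t-A}g_p(t-s)\,U^{(p)}({\rm d}s)\le K\sum_{j\ge0}\sup_{u\ge A+j}g_p(u)$ is uniformly small, so it remains to control $\int_{t-A}^{t}g_p(t-s)\,U^{(p)}({\rm d}s)$ and to show it converges, uniformly in $p$, to $\bar{\mu}^{(p)}(\alpha_p)^{-1}\int_0^A g_p$.

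The heart of the matter, and the main obstacle, is this last step: a uniform (Blackwell) renewal theorem over the family $\{\mu_\alpha^{(p)}\}_{p\in[p^\ast,1]}$. I would recast the desired uniformity through compactness: if $\sup_p|m_t^{(p),\phi}-m_\infty^{(p),\phi}|\not\to0$, there are $t_n\to\infty$ and $p_n\to p_\infty\in[p^\ast,1]$ with $|m_{t_n}^{(p_n),\phi}-m_\infty^{(p_n),\phi}|$ bounded away from $0$, so it suffices to prove the triangular-array statement $(g_{p_n}\ast U^{(p_n)})(t_n)\to m_\infty^{(p_\infty),\phi}$ whenever $p_n\to p_\infty$ and $t_n\to\infty$. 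Here I would use that $\mu_\alpha^{(p_n)}\Rightarrow\mu_\alpha^{(p_\infty)}$, that this limit is non-lattice by $({\bf A2})$, and that $\bar{\mu}^{(p_n)}(\alpha_{p_n})\to\bar{\mu}^{(p_\infty)}(\alpha_{p_\infty})\in(0,\infty)$ (continuity of $p\mapsto\alpha_p$ and dominated convergence, dominating $t\,e^{-\alpha_p t}$ via some $\theta_1<\theta_0<\alpha_{p^\ast}$ and $\hat{\mu}(\theta_1)<\infty$), together with the uniform bounds above; the conclusion then follows from continuity of the renewal measure under weak convergence of the increment law, equivalently from a coupling of the two renewal processes whose coupling time has uniformly integrable tails. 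Since $p\mapsto m_\infty^{(p),\phi}$ is continuous, $m_\infty^{(p_n),\phi}\to m_\infty^{(p_\infty),\phi}$, and the two limits match. The delicate point throughout is that $\mu$ is only assumed non-lattice and \emph{not} spread out, so the uniform Blackwell step must be run through weak convergence of the increment laws rather than through a density for the renewal measure.
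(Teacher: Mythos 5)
Your proposal is correct and its skeleton coincides with the paper's: write $m_t^{(p),\phi}$ as a convolution of $g_p(u)=e^{-\alpha_p u}\mathbb{E}[\phi(u)]$ with the renewal measure of the probability law $\mu_\alpha^{(p)}$ and then invoke a key renewal theorem that is uniform over $p\in[p^\ast,1]$. The differences are in execution. For the uniformity, the paper simply verifies the hypotheses of a ready-made uniform key renewal theorem (Tsirelson \cite[Theorem 2.12]{Bor2013}): uniform direct Riemann integrability of $\{g_p\}$ from $({\bf A6})$, weak compactness and uniform integrability of $\{\mu_\alpha^{(p)}\}$ from $({\bf A4})$, and the non-lattice condition $({\bf A2})$. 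You instead set out to prove that uniformity from scratch — uniform exponential domination of $g_p$, the uniform bound $\sup_{t,p}U^{(p)}([t,t+1])\le K$ via $({\bf A1})$, and a compactness reduction to a triangular-array Blackwell statement along $p_n\to p_\infty$, $t_n\to\infty$. That last step (convergence of $U^{(p_n)}([t_n,t_n+h])$ under weak convergence of non-lattice, uniformly integrable increment laws) is exactly the content of the cited theorem; your coupling sketch is the standard route to it but is left at the level of a gesture, so in a written version you would either have to carry out that coupling or, more economically, cite the same reference the paper does. Your treatment of part (ii) is genuinely different and slicker: rather than re-running the renewal argument with Lemma \ref{lemma4}(i) as the paper does, you observe that $W_{\varnothing}^{(p),\psi}$ is a martingale, so $m_t^{(p),\psi}\equiv\mathbb{E}[\hat{\Xi}^{(p)}(\alpha_p)]=1=m_\infty^{(p),\psi}$ and the statement is trivial; this is correct and arguably preferable. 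Your uniform bounds also yield the ``in particular'' boundedness claims directly, where the paper derives them from explicit estimates on $\alpha_p$, $\bar{\mu}^{(p)}(\alpha_p)$ and $\mathbb{E}[\hat{\phi}(\alpha_p)]$.
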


\begin{proof}
We only prove (i). The proofs of (ii) follows from exactly same argument and Lemma \ref{lemma4} (i). Recall from Remark \ref{rem4}  that $\mu_{\alpha}^{(p)}(dt) := e^{-t\alpha_{p}} \mu^{(p)}(dt)$, for $p \in [p^{\ast}, 1]$, is a probability measure on $(0, \infty)$. Theorem \ref{Teo1} implies that 
\begin{eqnarray*}
m_{t}^{(p), \phi} = \int_{0}^{t} e^{- (t-s) \alpha_{p}} \mathbb{E}[\phi(t-s)] \sum_{k=0}^{\infty} (\mu_{\alpha}^{(p)})^{\ast k} ({\rm d} s), \hspace*{5mm} t \geq 0,
\end{eqnarray*}

\noindent where we have used that
\begin{eqnarray*}
e^{- s\alpha_{p}}\sum_{k=0}^{\infty} (\mu^{(p)})^{\ast k} ({\rm d} s) =  \sum_{k=0}^{\infty} (\mu_{\alpha}^{(p)})^{\ast k} ({\rm d} s).
\end{eqnarray*}

\noindent From $({\bf A6})$, we deduce that the family of functions $t \mapsto e^{-t\alpha_{p}} \mathbb{E}[\phi(t)]$, for $p \in [p^{\ast}, 1]$, it is uniformly directly Riemann integrable (see \cite[Definition 2.8]{Bor2013}). Furthermore, $({\bf A4})$ implies that the family of probability measures $\{ \mu_{\alpha}^{(p)}: p \in [p^{\ast}, 1] \}$ is weakly compact (treated as a set of measure on $(0,\infty)$), and uniformly integrable, that is, 
\begin{eqnarray*}
\lim_{a \rightarrow \infty} \sup_{p \in [p^{\ast}, 1]} \int_{[a, \infty)} t \mu_{\alpha}^{(p)}({\rm d}t) = 0.
\end{eqnarray*}

\noindent Therefore, the point (i) is a consequence of the uniform version of the key renewal theorem \cite[Theorem 2.12]{Bor2013} since $\mu$ satisfies $({\bf A2})$. The second claim in (i) follows from the definition of $m_{\infty}^{(p), \phi}$ by noticing that
\begin{eqnarray*}
\alpha_{p^{\ast}} \leq \alpha_{p} \leq \alpha, \hspace*{3mm}  \bar{\mu}^{(p)}(\alpha_{p}) \geq p^{\ast}\bar{\mu}^{(1)}(\alpha) \hspace*{3mm} \text{and} \hspace*{3mm} \mathbb{E}[\hat{\phi}(\alpha_{p})] \leq \frac{\alpha}{ \alpha_{p^{\ast}}} \mathbb{E}[\hat{\phi}(\alpha_{p^{\ast}})];
\end{eqnarray*}

\noindent one also needs to recall that $\bar{\mu}^{(1)}(\alpha) >0$ by Remark \ref{rem2}. 
\end{proof}

By Proposition \ref{Pro1}, 
\begin{eqnarray*}
v_{t}^{(p), \phi} = Var ( W^{(p), \phi}_{\varnothing}(t) ), \hspace*{5mm} t \geq 0,
\end{eqnarray*}

\noindent is well defined. For $p \equiv 1$, we sometimes write  $v_{t}^{\phi} = Var ( W^{\phi}(t) )$, for $t \geq 0$. 

\begin{proposition} \label{Pro2}
Assume that conditions $({\bf A1})$-$({\bf A5})$ are fulfilled.
\begin{itemize}
\item[(i)] For any characteristic $\phi$ that does not depend on $p$ and that satisfies $({\bf A6})$-$({\bf A7})$, we have that
\begin{eqnarray*}
 \lim_{t \rightarrow \infty} \sup_{p \in [p^{\ast}, 1]} \left|v_{t}^{(p), \phi} -v_{\infty}^{(p), \phi} \right | = 0,
\end{eqnarray*}

\noindent where
\begin{eqnarray*}
v_{\infty}^{(p), \phi} := \left( m_{\infty}^{(p), \phi} \right)^{2} \frac{Var(\hat{\Xi}^{(p)}(\alpha_{p}))}{1-\hat{\mu}^{(p)}(2 \alpha_{p})}.
\end{eqnarray*}

\item[(ii)] For the characteristic $\psi$ defined in (\ref{eq10}), we have that 
\begin{eqnarray*}
 \lim_{t \rightarrow \infty} \sup_{p \in [p^{\ast}, 1]} \left|v_{t}^{(p), \psi} -v_{\infty}^{(p), \psi} \right | = 0,
\end{eqnarray*}

\noindent where $ v_{\infty}^{(p), \psi} :=  Var(\hat{\Xi}^{(p)}(\alpha_{p})) (1-\hat{\mu}^{(p)}(2 \alpha_{p}))^{-1}$. 

\item[(iii)] For the characteristic $\phi^{\prime} = \phi + m_{\infty}^{(p), \phi} \psi$, we have that 
\begin{eqnarray*}
 \lim_{t \rightarrow \infty} \sup_{p \in [p^{\ast}, 1]} \left|v_{t}^{(p), \phi^{\prime}} -v_{\infty}^{(p), \phi^{\prime}} \right | = 0,
\end{eqnarray*}

\noindent where $ v_{\infty}^{(p), \phi^{\prime}} = 4 v_{\infty}^{(p), \phi}$. 
\end{itemize}
\end{proposition}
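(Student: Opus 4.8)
The plan is to reduce all three parts to the asymptotics of a single \emph{defective} renewal integral, treated uniformly in $p\in[p^\ast,1]$. Starting from the variance identity in Theorem \ref{Teo1} and the definition $v_t^{(p),\phi}=\mathrm{Var}(W^{(p),\phi}_\varnothing(t))=e^{-2t\alpha_p}\mathrm{Var}(Z_\varnothing^{(p),\phi}(t))$, I would twist out the Malthusian exponential. Setting $\mu_{2\alpha}^{(p)}(\mathrm{d}s):=e^{-2s\alpha_p}\mu^{(p)}(\mathrm{d}s)$ and using $(\mu_{2\alpha}^{(p)})^{\ast k}(\mathrm{d}s)=e^{-2s\alpha_p}(\mu^{(p)})^{\ast k}(\mathrm{d}s)$, Theorem \ref{Teo1} rewrites as
\begin{eqnarray*}
v_t^{(p),\phi}=\int_0^t g^{(p)}(t-s)\,U^{(p)}(\mathrm{d}s),\qquad g^{(p)}(r):=e^{-2r\alpha_p}h_\varnothing^{(p)}(r),\quad U^{(p)}:=\sum_{k=0}^\infty(\mu_{2\alpha}^{(p)})^{\ast k}.
\end{eqnarray*}
Since $2\alpha_p>\alpha_p$ and $\hat\mu^{(p)}$ is decreasing, $\mu_{2\alpha}^{(p)}$ is sub-probability with mass $\hat\mu^{(p)}(2\alpha_p)<1$, so $U^{(p)}$ is \emph{finite} with total mass $(1-\hat\mu^{(p)}(2\alpha_p))^{-1}$; by continuity of $p\mapsto\hat\mu^{(p)}(2\alpha_p)$ on the compact interval $[p^\ast,1]$ one has $\sup_p\hat\mu^{(p)}(2\alpha_p)<1$. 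Because the target is precisely $v_\infty^{(p),\phi}=g_\infty^{(p)}\,(1-\hat\mu^{(p)}(2\alpha_p))^{-1}$ with $g_\infty^{(p)}:=\lim_{r\to\infty}g^{(p)}(r)$, everything reduces to (a) identifying $g_\infty^{(p)}$ and (b) passing to the limit in the convolution, both uniformly in $p$.

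For step (a) I would pull $e^{-r\alpha_p}$ inside the variance in (\ref{eq1}) and use $\mathbb{E}[Z_\varnothing^{(p),\phi}(r-s)]=e^{(r-s)\alpha_p}m_{r-s}^{(p),\phi}$ to get $g^{(p)}(r)=\mathrm{Var}(X_r^{(p)})$ with
\begin{eqnarray*}
X_r^{(p)}:=e^{-r\alpha_p}\phi_\varnothing(r)+\int_0^r e^{-s\alpha_p}\,m_{r-s}^{(p),\phi}\,\Xi_\varnothing^{(p)}(\mathrm{d}s).
\end{eqnarray*}
The claim is that $X_r^{(p)}\to m_\infty^{(p),\phi}\,\hat\Xi_\varnothing^{(p)}(\alpha_p)$ in $L_2(\mathbb{P})$ uniformly in $p$, whence $g_\infty^{(p)}=(m_\infty^{(p),\phi})^2\,\mathrm{Var}(\hat\Xi^{(p)}(\alpha_p))$. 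The first term vanishes because $e^{-2r\alpha_p}\mathbb{E}[\phi(r)^2]\to0$: the mean part is killed by $(\mathbf{A6})$ (as $2\theta_2<2\alpha_p$) and the variance part by $(\mathbf{A7})$ (as $\theta_3<2\alpha_{p^\ast}\le2\alpha_p$). For the integral term I would invoke the uniform convergence $m_{r-s}^{(p),\phi}\to m_\infty^{(p),\phi}$ from Proposition \ref{Pro1}(i) on the bulk $s\le r-A$, the decay of $e^{-s\alpha_p}$ on the remaining tail, and the pathwise thinning domination $\hat\Xi^{(p)}(\alpha_p)\le\hat\Xi(\theta_1)$ together with $(\mathbf{A5})$ to produce a $p$-uniform $L_2$-dominating variable.

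For step (b) the defect of $U^{(p)}$ replaces the key renewal theorem by an elementary split at $s=t-A$. On $[0,t-A]$ the integrand lies within $\varepsilon$ of $g_\infty^{(p)}$ once $A$ is large, contributing $g_\infty^{(p)}\,U^{(p)}([0,t-A])+O(\varepsilon)$; on $(t-A,t]$ the bounded integrand is weighted by the tail mass $U^{(p)}((t-A,\infty))$, which tends to $0$. The ingredients are the uniform boundedness of $g^{(p)}$ and of the total mass $(1-\hat\mu^{(p)}(2\alpha_p))^{-1}$, and a $p$-uniform tail estimate $\sup_p U^{(p)}((A,\infty))\to0$, which I would get by an exponential-tilting (Markov) bound $U^{(p)}((A,\infty))\le e^{-\theta A}(1-\hat\mu^{(p)}(2\alpha_p-\theta))^{-1}$ for a fixed small $\theta$, the denominator being bounded away from $0$ uniformly by $(\mathbf{A4})$ and compactness of $[p^\ast,1]$. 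Combining the two pieces gives $v_t^{(p),\phi}\to g_\infty^{(p)}(1-\hat\mu^{(p)}(2\alpha_p))^{-1}=v_\infty^{(p),\phi}$ uniformly, proving (i). Part (ii) is the same argument run for $\psi$ of (\ref{eq10}), which satisfies $(\mathbf{A6})$–$(\mathbf{A7})$ uniformly in $p$ by Lemma \ref{lemma4} and has $m_\infty^{(p),\psi}=1$ by Remark \ref{rem3}, so $g_\infty^{(p)}=\mathrm{Var}(\hat\Xi^{(p)}(\alpha_p))$ and the stated $v_\infty^{(p),\psi}$ follows.

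For (iii) I would exploit linearity of the characteristic counting process: $Z_\varnothing^{(p),\phi'}=Z_\varnothing^{(p),\phi}+m_\infty^{(p),\phi}Z_\varnothing^{(p),\psi}$, so $\phi\mapsto m_\infty^{(p),\phi}$ is linear and, using $m_\infty^{(p),\psi}=1$ (Remark \ref{rem3}), $m_\infty^{(p),\phi'}=m_\infty^{(p),\phi}(1+m_\infty^{(p),\psi})=2m_\infty^{(p),\phi}$. The characteristic $\phi'=\phi+m_\infty^{(p),\phi}\psi$ depends on $p$, but it still satisfies $(\mathbf{A6})$–$(\mathbf{A7})$ uniformly in $p$ (the fixed $\phi$, the uniform bounds for $\psi$ from Lemma \ref{lemma4}, and $\sup_p m_\infty^{(p),\phi}<\infty$ from Proposition \ref{Pro1}(i)), and the uniform moment convergence $m_r^{(p),\phi'}\to2m_\infty^{(p),\phi}$ follows from Proposition \ref{Pro1}(i)–(ii); hence steps (a)–(b) apply verbatim to $\phi'$, yielding $v_\infty^{(p),\phi'}=(m_\infty^{(p),\phi'})^2\,\mathrm{Var}(\hat\Xi^{(p)}(\alpha_p))(1-\hat\mu^{(p)}(2\alpha_p))^{-1}=4\,v_\infty^{(p),\phi}$. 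The hard part throughout will be the $p$-uniform $L_2$ convergence of $X_r^{(p)}$ in step (a): one must simultaneously control the near-diagonal contribution, where $m_{r-s}^{(p),\phi}$ has not yet stabilised, and the tail of $\hat\Xi_\varnothing^{(p)}(\alpha_p)$, uniformly over the whole family $p\in[p^\ast,1]$ along which the Malthusian parameters $\alpha_p$ themselves vary; once this is secured, the defective-renewal passage and the linearity step (iii) are routine bookkeeping on top of Proposition \ref{Pro1} and the standing assumptions.
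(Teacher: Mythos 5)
Your proposal is correct and follows essentially the same route as the paper: both rewrite $v_t^{(p),\phi}$ via Theorem \ref{Teo1} as a convolution of the twisted local variance $e^{-2r\alpha_p}h_\varnothing^{(p)}(r)$ with the defective renewal measure of total mass $(1-\hat{\mu}^{(p)}(2\alpha_p))^{-1}$, identify the limit of that integrand as $(m_\infty^{(p),\phi})^2\,Var(\hat{\Xi}^{(p)}(\alpha_p))$ using $({\bf A6})$--$({\bf A7})$, Proposition \ref{Pro1} and the domination $e^{-u\alpha_p}\Xi^{(p)}({\rm d}u)\leq e^{-u\alpha_{p^\ast}}\Xi({\rm d}u)$ with $({\bf A4})$--$({\bf A5})$, and deduce (ii)--(iii) from Lemma \ref{lemma4}, Remark \ref{rem3} and linearity. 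The only divergence is in the final passage to the limit in the convolution, where the paper applies dominated convergence while you split at $t-A$ and use an explicit exponential-tilting tail bound on $U^{(p)}$ -- a minor variation that, if anything, makes the uniformity in $p$ more transparent.
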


\begin{proof}
We only prove (i). The proof of (ii) and (iii) follows similarly by using Lemma \ref{lemma4} and Remark \ref{rem3}. Theorem \ref{Teo1} implies that
\begin{eqnarray*}
v_{t}^{(p), \phi} = \int_{0}^{t} e^{-2(t-s)\alpha_{p}} h^{(p)}_{\varnothing}(t-s) e^{-2s\alpha_{p}} \sum_{k=0}^{\infty} (\mu^{(p)})^{\ast k} ({\rm d} s), \hspace*{5mm} t \geq 0,
\end{eqnarray*}

\noindent where the function $h^{(p)}_{\varnothing}$ is defined in (\ref{eq1}). First, notice the following identity
\begin{eqnarray} \label{eq15}
\int_{0}^{\infty} e^{-2s \alpha_{p} }  \sum_{k=0}^{\infty} (\mu^{(p)})^{\ast k} ({\rm d} s) = \sum_{k=0}^{\infty} \hat{\mu}^{(p)}(2 \alpha_{p})^{k} = \frac{1}{1-\hat{\mu}^{(p)}(2 \alpha_{p})} < \infty,
\end{eqnarray}

\noindent since $\hat{\mu}^{(p)}(2 \alpha_{p}) < \hat{\mu}^{(p)}(\alpha_{p}) =1$. Then, the triangle inequality implies that 
\begin{align} \label{eq5}
\left|v_{t}^{(p), \phi} -v_{\infty}^{(p), \phi} \right | & \leq \int_{0}^{t}e^{-2(t-s)\alpha_{p}} Var(\phi_{\varnothing}(t-s)) e^{ -2s\alpha_{p}} \sum_{k=0}^{\infty} (\mu^{(p)})^{\ast k} ({\rm d} s) \nonumber  \\
& \hspace*{2mm} + 2\int_{0}^{t}e^{-(t-s)\alpha_{p}} \left| Cov \left(\phi_{\varnothing}(t-s),   \int_{0}^{t-s} m_{t-s-u}^{(p), \phi} e^{-u \alpha_{p}} \Xi^{(p)}_{\varnothing}({\rm d} u)  \right)\right|  e^{ -2s\alpha_{p}} \sum_{k=0}^{\infty} (\mu^{(p)})^{\ast k} ({\rm d} s) \nonumber \\
& \hspace*{2mm} + \int_{0}^{\infty} g^{(p)}(t-s) e^{-2s\alpha_{p}} \sum_{k=0}^{\infty} (\mu^{(p)})^{\ast k} ({\rm d} s), 
\end{align}

\noindent  where $(\Xi_{\varnothing}^{(p)}, \phi_{\varnothing})$ is the birth process and weight associated to the progenitor of the population and 
\begin{eqnarray*}
g^{(p)}(t-s) = \left | \mathds{1}_{\{ s\in [0,t] \}}  Var \left(  \int_{0}^{t-s} m_{t-s-u}^{(p), \phi} e^{-u\alpha_{p}} \Xi^{(p)}_{\varnothing}({\rm d} u) \right) - \left( m_{\infty}^{(p), \phi} \right)^{2} 
Var(\hat{\Xi}^{(p)}_{\varnothing}(\alpha_{p}))
 \right|.
\end{eqnarray*}

\noindent Denote by $I_{1}^{(p)}(t)$, $I_{2}^{(p)}(t)$ and $I_{3}^{(p)}(t)$ the first, the second and the third term on the right-hand side of (\ref{eq5}), respectively. Then the claim in Proposition \ref{Pro2} follows by showing that
\begin{itemize}
\item[(a)] $\lim_{t \rightarrow \infty} \sup_{p \in [p^{\ast}, 1]} I_{1}^{(p)}(t) = 0$,
\item[(b)] $\lim_{t \rightarrow \infty} \sup_{p \in [p^{\ast}, 1]} I_{2}^{(p)}(t) = 0$ and
\item[(c)] $\lim_{t \rightarrow \infty} \sup_{p \in [p^{\ast}, 1]} I_{3}^{(p)}(t) = 0$.  
\end{itemize}

We start by showing (a). By assumption $({\bf A7})$,
\begin{eqnarray} \label{eq6}
\lim_{t \rightarrow \infty} \sup_{p \in [p^{\ast}, 1]} e^{-2t\alpha_{p}} Var(\phi(t)) \leq \lim_{t \rightarrow \infty} e^{-2t\alpha_{p^{\ast}}} Var(\phi(t)) = 0.
\end{eqnarray}

\noindent Then (\ref{eq15}) and the dominated convergence theorem prove point (a). 

Next we show (b). The Cauchy-Schwarz's inequality and Proposition \ref{Pro1} imply that
\begin{eqnarray*}
\left| Cov \left(\phi_{\varnothing}(t),   \int_{0}^{t} m_{t-u}^{(p), \phi} e^{-u \alpha_{p}} \Xi^{(p)}_{\varnothing}({\rm d} u)  \right)\right|   & \leq & (Var(\phi_{\varnothing}(t)))^{\frac{1}{2}} \left( Var\left( \int_{0}^{t} m_{t-u}^{(p), \phi} e^{-u \alpha_{p}} \Xi^{(p)}_{\varnothing}({\rm d} u)\right) \right)^{\frac{1}{2}}  \\
& \leq &  \left(Var(\phi_{\varnothing}(t))  Var\left( \int_{0}^{t} e^{-u \alpha_{p}} \Xi^{(p)}_{\varnothing}({\rm d} u)\right)  \right)^{\frac{1}{2}} \sup_{t \geq 0} \sup_{p \in [p^{\ast},1]} m_{t}^{(p), \phi}.
\end{eqnarray*}

\noindent Since $e^{-u\alpha_{p} } \Xi^{(p)}({\rm d} u)$ is dominated by $e^{-u\alpha_{p^{\ast}}} \Xi({\rm d} u)$-$({\bf A4})$, $({\bf A5})$ and (\ref{eq6}) allow us to deduce  that 
\begin{eqnarray*}
\lim_{t \rightarrow \infty} \sup_{p \in [p^{\ast}, 1]}  e^{-t\alpha_{p}} \left| Cov \left(\phi_{\varnothing}(t),   \int_{0}^{t} m_{t-u}^{(p), \phi} e^{-u \alpha_{p}} \Xi^{(p)}_{\varnothing}({\rm d} u)  \right)\right|  = 0.
\end{eqnarray*}

\noindent Hence, an application of the dominated convergence theorem shows (b).

Finally, we prove (c). We show that
\begin{eqnarray} \label{eq4}
 \lim_{t \rightarrow \infty} \sup_{p \in [p^{\ast}, 1]} g^{(p)}(t-s) = 0,
\end{eqnarray}

\noindent which together with an application of the dominated convergence theorem implies (c). Observe that
\begin{align} \label{eq2}
g^{(p)}(t-s) & \leq g_{1}^{(p)}(t-s) + g_{2}^{(p)}(t-s) + g_{3}^{(p)}(t-s)  \nonumber \\
& \hspace*{10mm} + g_{4}^{(p)}(t-s)  + \mathds{1}_{\{ s\in (t,\infty) \}} \left( m_{\infty}^{(p), \phi} \right)^{2} 
Var(\hat{\Xi}^{(p)}(\alpha_{p})),
\end{align}

\noindent where 
\begin{eqnarray*}
g_{1}^{(p)}(t-s) = \mathds{1}_{\{ s\in [0,t] \}} Var\left( \int_{0}^{t-s} \left( m_{t-s-u}^{(p), \phi} - m_{\infty}^{(p), \phi} \right) e^{-u\alpha_{p}} \Xi^{(p)}({\rm d} u)  \right),
\end{eqnarray*}

\begin{eqnarray*}
g_{2}^{(p)}(t-s) = 2 \mathds{1}_{\{ s\in [0,t] \}}\left|  Cov\left(  \int_{0}^{t-s} \left( m_{t-s-u}^{(p), \phi} - m_{\infty}^{(p), \phi} \right) e^{-u\alpha_{p}} \Xi^{(p)}({\rm d} u),   m_{\infty}^{(p), \phi} \int_{0}^{t-s}  e^{-u\alpha_{p}} \Xi^{(p)}({\rm d} u)  \right) \right|,
\end{eqnarray*}

\begin{eqnarray*}
g_{3}^{(p)}(t-s) = 2 \left( m_{\infty}^{(p), \phi}  \right)^{2} \mathds{1}_{\{ s\in [0,t] \}}\left|  Cov\left(  \hat{\Xi}^{(p)}(\alpha_{p}),   \int_{t-s}^{\infty}  e^{-u\alpha_{p}} \Xi^{(p)}({\rm d} u)  \right) \right|,
\end{eqnarray*}

\noindent and
\begin{eqnarray*}
g_{4}^{(p)}(t-s) = \mathds{1}_{\{ s\in [0,t] \}}  \left( m_{\infty}^{(p), \phi}  \right)^{2}  Var\left( \int_{t-s}^{\infty}  e^{-u\alpha_{p}} \Xi^{(p)}({\rm d} u)  \right).
\end{eqnarray*}

\noindent Proposition \ref{Pro1} and $({\bf A5})$ imply that
\begin{eqnarray} \label{eq7}
\lim_{t \rightarrow \infty} \sup_{p \in [p^{\ast}, 1]} g_{4}^{(p)}(t-s) = 0 \hspace*{5mm} \text{and} \hspace*{5mm} \lim_{t \rightarrow \infty} \sup_{p \in [p^{\ast}, 1]} \mathds{1}_{\{ s\in (t,\infty) \}} \left( m_{\infty}^{(p), \phi} \right)^{2} 
Var(\hat{\Xi}^{(p)}(\alpha_{p})) = 0.
\end{eqnarray}

\noindent Furthermore, Proposition \ref{Pro1} implies that 
\begin{eqnarray*}
 \sup_{t \geq 0} \sup_{p \in [p^{\ast}, 1]} \left| m_{t}^{(p), \phi} - m_{\infty}^{(p), \phi} \right | < \infty.
\end{eqnarray*}

\noindent Recall that $e^{-s \alpha_{p} } \Xi^{(p)}({\rm d} s)$ is dominated by $e^{-s\alpha_{p^{\ast}}} \Xi({\rm d} s)$ and that $\int_{0}^{\infty}  e^{-s\alpha_{p^{\ast}} } \Xi({\rm d} s) < \infty$, by $({\bf A4})$. Thus  the dominated convergence theorem shows that
\begin{eqnarray*}  
 \lim_{t \rightarrow \infty} \sup_{p \in [p^{\ast}, 1]} \left|  \mathds{1}_{\{ s\in [0,t] \}}  \int_{0}^{t-s} \left( m_{t-s-u}^{(p), \phi} - m_{\infty}^{(p), \phi} \right) e^{-u\alpha_{p}} \Xi^{(p)}({\rm d} u)   \right | = 0,
\end{eqnarray*}

\noindent almost surely. This implies together with the dominated convergence theorem once again that 
\begin{eqnarray} \label{eq8}
\lim_{t \rightarrow \infty} \sup_{p \in [p^{\ast}, 1]} g_{1}^{(p)}(t-s) = 0.
\end{eqnarray}

\noindent Similarly, one can deduce that 
\begin{eqnarray} \label{eq9}
\lim_{t \rightarrow \infty} \sup_{p \in [p^{\ast}, 1]} g_{2}^{(p)}(t-s) = 0 \hspace*{5mm} \text{and} \hspace*{5mm} \lim_{t \rightarrow \infty} \sup_{p \in [p^{\ast}, 1]} g_{3}^{(p)}(t-s) = 0.
\end{eqnarray}

\noindent Finally, our claim in (\ref{eq4}) follows by combining (\ref{eq2}), (\ref{eq7}), (\ref{eq8}) and (\ref{eq9}). 
\end{proof}

We have now all the ingredients to prove Lemma \ref{lemma1}. 

\begin{proof}[Proof of Lemma \ref{lemma1}]
Notice that 
\begin{align} \label{eq30}
& \mathbb{E} \left[ \sup_{s\geq t} \left| W^{(p), \psi}_{\varnothing}(s) - W^{(p), \psi}_{\varnothing}(\infty)  \right|^{2} \right] \nonumber \\
& \hspace*{20mm} \leq 2 \mathbb{E} \left[ \left| W^{(p), \psi}_{\varnothing}(t) - W^{(p), \psi}_{\varnothing}(\infty)  \right|^{2} \right] + 2 \mathbb{E} \left[ \sup_{s\geq t} \left| W^{(p), \psi}_{\varnothing}(s) - W^{(p), \psi}_{\varnothing}(t)  \right|^{2} \right],
\end{align}

\noindent for $t \geq 0$. On the one hand, from properties of square-integrable martingales, we obtain that
\begin{eqnarray} \label{eq31}
\mathbb{E}\left[ \left| W^{(p), \psi}_{\varnothing}(t) - W^{(p), \psi}_{\varnothing}(\infty)  \right|^{2} \right] & = & \mathbb{E}\left[ W^{(p), \psi}_{\varnothing}(\infty)^{2} \right] - \mathbb{E}\left[  W^{(p), \psi}_{\varnothing}(t)^{2} \right], \hspace*{5mm} t \geq 0.
\end{eqnarray}

\noindent On the other hand, by Doob's inequality 
\begin{eqnarray} \label{eq32}
\mathbb{E} \left[ \sup_{s\geq t} \left| W^{(p), \psi}_{\varnothing}(s) - W^{(p), \psi}_{\varnothing}(t)  \right|^{2} \right] \leq 4 \mathbb{E}\left[  W^{(p), \psi}_{\varnothing}(\infty)^{2} \right] - 4\mathbb{E}\left[  W^{(p), \psi}_{\varnothing}(t)^{2} \right], \hspace*{5mm} t \geq 0.
\end{eqnarray}

\noindent By combining (\ref{eq30}), (\ref{eq31}) and (\ref{eq32}), we deduce that
\begin{eqnarray*}
\mathbb{E}\left[ \sup_{s \geq t} \left| W^{(p), \psi}_{\varnothing}(s) - W^{(p),\psi}_{\varnothing}(\infty)  \right|^{2} \right] & \leq & 10 \left( v_{\infty}^{(p), \psi} - v_{t}^{(p), \psi} \right) + 10 \left( \left( m_{\infty}^{(p), \psi} \right)^{2}- \left(  m_{t}^{(p), \psi} \right)^{2} \right) \\
& = & 10 \left( v_{\infty}^{(p), \psi} - v_{t}^{(p), \psi} \right)
\end{eqnarray*}

\noindent since $W^{(p), \psi}_{\varnothing}$ is a martingale. Therefore, the first statement follows from Propositions \ref{Pro2}.

We turn our attention to the second claim. Observe that
\begin{align} \label{eq11}
& \mathbb{E} \left[  \left| W^{(p), \phi}_{\varnothing}(t) - m_{\infty}^{(p), \phi}W^{(p), \psi}_{\varnothing}(\infty)  \right|^{2} \right] \nonumber \\
& \hspace*{5mm} \leq 2 \mathbb{E} \left[  \left| W^{(p), \phi}_{\varnothing}(t) - m_{\infty}^{(p), \phi}W^{(p), \psi}_{\varnothing}(t)  \right|^{2} \right] + 2 \left(m_{\infty}^{(p), \phi} \right)^{2} \mathbb{E} \left[  \left| W^{(p), \psi}_{\varnothing}(t) - W^{(p), \psi}_{\varnothing}(\infty)  \right|^{2} \right],
\end{align}

\noindent for $t \geq 0$. It follows from the first part that 
\begin{eqnarray*}
\lim_{t \rightarrow \infty} \sup_{p \in [p^{\ast},1]} \mathbb{E} \left[  \left| W^{(p), \psi}_{\varnothing}(t) - W^{(p), \psi}_{\varnothing}(\infty)  \right|^{2} \right] = 0.
\end{eqnarray*}

\noindent In order to conclude, it is enough to show that the first term on the right-hand side of (\ref{eq11}) tends to $0$ uniformly on $p \in [p^{\ast},1]$ as $t \rightarrow \infty$. By Proposition \ref{Pro1}, this is equivalent to show that 
\begin{eqnarray} \label{eq12}
\lim_{t \rightarrow \infty} \sup_{p \in [p^{\ast},1]} Var \left( W^{(p), \phi}_{\varnothing}(t) - m_{\infty}^{(p), \phi}W^{(p), \psi}_{\varnothing}(t) \right) = 0
\end{eqnarray}

\noindent since $W^{(p), \phi}_{\varnothing}(t) + m_{\infty}^{(p), \phi}W^{(p), \psi}_{\varnothing}(t) = W^{(p), \phi^{\prime}}_{\varnothing}(t)$, where 
$ \phi^{\prime}(t) = \phi(t) +  m_{\infty}^{(p), \phi} \psi(t)$. But (\ref{eq12}) follows from Proposition \ref{Pro2}, Remark \ref{rem3} and the identity
\begin{align*}
& Var \left( W^{(p), \phi}_{\varnothing}(t) - m_{\infty}^{(p), \phi}W^{(p), \psi}_{\varnothing}(t) \right) \\
& \hspace*{5mm} = 2 Var \left( W^{(p), \phi}_{\varnothing}(t) \right) + 2 Var \left( m_{\infty}^{(p), \phi}W^{(p), \psi}_{\varnothing}(t) \right) - Var \left( W^{(p), \phi}_{\varnothing}(t) + m_{\infty}^{(p), \phi}W^{(p), \psi}_{\varnothing}(t) \right).
\end{align*}
\end{proof}

Finally, we conclude this section with the proof of Lemma \ref{lemma2}. The idea of the proof is similar to that of \cite[Lemma 3]{BeU2015}.

\begin{proof}[Proof of Lemma \ref{lemma2}]
We first prove that the double limit 
\begin{eqnarray} \label{eq19}
 \lim_{p \rightarrow 1, t \rightarrow \infty} W_{\varnothing}^{(p), \phi}(t) \hspace*{5mm} \text{exists in} \, \, L_{2}(\mathbb{P}).
\end{eqnarray}

\noindent Denote the $L_{2}(\mathbb{P})$-norm by $\parallel \cdot \parallel_{2}$. We claim that 
\begin{eqnarray} \label{eq13}
 \lim_{p \rightarrow 1}   \parallel W^{(p), \phi}_{\varnothing}(t) - W^{ \phi}(t)  \parallel_{2} \, \,  = 0, \hspace*{5mm} \text{for} \, \, t \geq 0. 
\end{eqnarray}

\noindent Recall that at each birth event, independently of all the other individuals, the newborn is a clone of its parent with probability $p$ or a mutant with probability $1-p$. Then, it should be plain from the thinning property of point measure processes that the birth process of the mutant children of the ancestor $\varnothing$, denoted by $\Xi_{\varnothing}^{(p), \text{m}}$, has intensity measure given $(1-p) \mu ({\rm d}t)$. Furthermore, the later is independent of the birth process of the clonal children of the ancestor described in Section \ref{Main}.  Let $Z^{(p), \text{m}} = (Z^{(p), \text{m}}(t), t\geq 0 )$ be the process that counts the number of mutants that have been born up time $t \geq 0$ and define
\begin{eqnarray*}
b^{(p)}_{1} = \inf \{t \geq 0:  Z^{(p), \text{m}}(t) > 0  \}
\end{eqnarray*}

\noindent the first birth time of a mutant. Plainly, $\lim_{p \rightarrow 1} b^{(p)}_{1} = \infty$ in probability, and the probability of the event $\{ t \geq b^{(p)}_{1} \}$ can be made as small as we wish by choosing $p$ sufficiently close to $1$. On the one hand,  as $ Z^{(p), \phi}_{\varnothing}(t) \leq Z^{ \phi}(t)$, we have 
\begin{eqnarray*}
\mathbb{E} \left[  \left| W^{(p), \phi}_{\varnothing}(t) -W^{ \phi}(t) \right|^{2}, t \geq b^{(p)}_{1} \right] \leq (e^{2(\alpha-\alpha_{p}) t} + 1 ) \mathbb{E} \left[  \left| W^{ \phi}(t) \right|^{2}, t \geq b^{(p)}_{1} \right]
\end{eqnarray*}

\noindent and the right-hand side goes to $0$ as $p \rightarrow 1$. On the other hand, on the event $\{ t < b^{(p)}_{1} \}$, we have $ Z^{(p), \phi}_{\varnothing}(t) = Z^{ \phi}(t)$ and hence $ W^{(p), \phi}_{\varnothing}(t) = e^{(\alpha-\alpha_{p}) t} W^{ \phi}(t)$. This yields
\begin{eqnarray*}
\mathbb{E} \left[  \left| W^{(p), \phi}_{\varnothing}(t_{\varepsilon}) -W^{ \phi}(t) \right|^{2}, t < b^{(p)}_{1} \right] \leq (e^{(\alpha-\alpha_{p}) t} - 1 ) \mathbb{E} \left[  \left| W^{ \phi}(t) \right|^{2} \right]
\end{eqnarray*}

\noindent and the right-hand side goes to $0$ as $p \rightarrow 1$. This establishes the convergence in (\ref{eq13}).

Let $\varepsilon > 0$ be arbitrary. By Lemma \ref{lemma1}, we can find $t_{\varepsilon} > 0$ such that
\begin{eqnarray} \label{eq14}
 \sup_{p \in [p^{\ast},1]} \parallel W^{(p), \phi}_{\varnothing}(s) - m_{\infty}^{(p), \phi}W^{(p), \psi}_{\varnothing}(\infty) \parallel_{2} \, \, \leq \frac{\varepsilon}{6} \hspace*{5mm} \text{for all} \hspace*{5mm} s \geq t_{\varepsilon}.
\end{eqnarray}

\noindent By  using (\ref{eq13}), one obtains that for $s \geq t_{\varepsilon}$ there is $ \delta_{\varepsilon} > 0$ such that $1-p < \delta_{\varepsilon} $ (for $p \in [p^{\ast}, 1)$) implies that
\begin{eqnarray} \label{eq21}
  \parallel W^{(p), \phi}_{\varnothing}(s) - W^{ \phi}(s) \parallel_{2} \, \, \leq \frac{\varepsilon}{6}.
\end{eqnarray}

\noindent Let us take  $s_{1}, s_{2} \geq t_{\varepsilon}$ and $p_{1}, p_{2} \in [p^{\ast}, 1)$ such that $1-p_{1} < \delta_{\varepsilon} $ and $1-p_{2} < \delta_{\varepsilon} $. The Minkowski inequality  implies that for $s \geq t_{\varepsilon}$ 
\begin{align*}
& \parallel W^{(p_{1}), \phi}_{\varnothing}(s_{1}) - W^{(p_{2}), \phi}_{\varnothing}(s_{2})  \parallel_{2}  \\
& \hspace*{10mm}  \leq \hspace*{2mm} \parallel W^{(p_{1}), \phi}_{\varnothing}(s_{1}) - m_{\infty}^{(p_{1}), \phi}W^{(p_{1}), \psi}_{\varnothing}(\infty)   \parallel_{2}  + \parallel  W^{(p_{1}), \phi}_{\varnothing}(s)  - m_{\infty}^{(p_{1}), \phi}W^{(p_{1}), \psi}_{\varnothing}(\infty) \parallel_{2} \\
&  \hspace*{20mm} + \parallel  W^{(p_{1}), \phi}_{\varnothing}(s)  - W^{ \phi}(s)  \parallel_{2} + \parallel  W^{(p_{2}), \phi}_{\varnothing}(s)  - W^{ \phi}(s)  \parallel_{2} \\
& \hspace*{20mm} +  \hspace*{2mm} \parallel W^{(p_{2}), \phi}_{\varnothing}(s_{2}) - m_{\infty}^{(p_{2}), \phi}W^{(p_{2}), \psi}_{\varnothing}(\infty)   \parallel_{2}  + \parallel  W^{(p_{2}), \phi}_{\varnothing}(s)  - m_{\infty}^{(p_{2}), \phi}W^{(p_{2}), \psi}_{\varnothing}(\infty) \parallel_{2}. 
\end{align*}

\noindent By (\ref{eq14}) and (\ref{eq21}), we deduce that $\parallel W^{(p_{1}), \phi}_{\varnothing}(s_{1}) - W^{(p_{2}), \phi}_{\varnothing}(s_{2})  \parallel_{2} \, \, \leq \varepsilon$. Thus, $W^{(p), \phi}_{\varnothing}(t)$ is $L_{2}(\mathbb{P})$-Cauchy. Therefore, the claim in (\ref{eq19}) follows from the well-known completeness of $L_{2}(\mathbb{P})$; see \cite[Theorem 6.14]{Bartle1995}.

Finally, we show that the claim in Lemma \ref{lemma2} holds. Notice that (\ref{eq19}) implies that there exists a square integrable variable $\overline{W}$ such that  $\lim_{p \rightarrow 1, t \rightarrow \infty} W_{\varnothing}^{(p), \phi}(t) = \overline{W}$ in $L_{2}(\mathbb{P})$. Thus, it is enough to show that $\overline{W} = m_{\infty}^{\phi} W^{\psi}(\infty)$, where $\psi$ is defined in (\ref{eq10}) with $p\equiv1$. In this direction, for $t \geq 0$, recall that (\ref{eq13}) shows that $\lim_{p \rightarrow 1} W_{\varnothing}^{(p), \phi}(t) = W^{\phi}(t)$ in $L_{2}(\mathbb{P})$. Furthermore, Lemma \ref{lemma1} implies that 
\begin{eqnarray} \label{eq24}
\lim_{t \rightarrow \infty} \lim_{p \rightarrow 1} W_{\varnothing}^{(p), \phi}(t) = \lim_{t \rightarrow \infty} W^{\phi}(t) =  m_{\infty}^{\phi} W^{\psi}(\infty), \hspace*{5mm} \text{in} \, \, L_{2}(\mathbb{P}).
\end{eqnarray}

\noindent On the other hand, (\ref{eq19}) implies that for $\varepsilon > 0$ there are $\delta_{\varepsilon}, t_{\varepsilon} > 0$ such that for $1-p < \delta_{\varepsilon}$ (for $p \in [p^{\ast}, 1)$) and $s \geq t_{\varepsilon}$ we have that $\parallel W^{(p), \phi}_{\varnothing}(s) - \overline{W} \parallel_{2} \, \, \leq \varepsilon$. Hence (\ref{eq13}) and an application of the dominated convergence theorem allow us to conclude that
\begin{eqnarray*}
\lim_{p \rightarrow 1}  \parallel W^{(p), \phi}_{\varnothing}(s) - \overline{W} \parallel_{2} \, \, = \parallel W^{\phi}(s) - \overline{W} \parallel_{2} \, \, \leq \varepsilon,
\end{eqnarray*}

\noindent i.e.\ $\lim_{t \rightarrow \infty} \lim_{p \rightarrow 1} W_{\varnothing}^{(p), \phi}(t) = \overline{W}$ in $L_{2}(\mathbb{P})$ which combined with (\ref{eq24}) concludes the proof. 
\end{proof}

\paragraph{Acknowledgements.} 
This work was started when I was member of the Institut f\"ur Mathematische Stochastik of Georg-August-Universit\"at G\"ottingen and was supported by the DFG-SPP Priority Programme 1590, {\sl Probabilistic Structures in Evolution}. I would like to thank Juan Carlos Pardo for his comments on an earlier draft of this manuscript. I am very grateful to the referee, whose extremely careful reading and helpful comments led to several improvements in the exposition of this paper. 


\providecommand{\bysame}{\leavevmode\hbox to3em{\hrulefill}\thinspace}
\providecommand{\MR}{\relax\ifhmode\unskip\space\fi MR }
\providecommand{\MRhref}[2]{%
  \href{http://www.ams.org/mathscinet-getitem?mr=#1}{#2}
}
\providecommand{\href}[2]{#2}

\end{document}